\newtheorem{theorem}{Theorem}[section]
\newtheorem{proposition}[theorem]{Proposition}
\newtheorem{lemma}[theorem]{Lemma}
\newtheorem{remark}[theorem]{Remark}
\theoremstyle{definition}
\newtheorem{example}{Example}
\newtheorem*{ack}{Acknowledgement}
\newcommand{\R}{\operatorname{R}}
\newcommand{\I}{\operatorname{I}}
\newcommand{\M}{\operatorname{M}}
\newcommand{\overbar}[1]{\mkern 1.5mu\overline{\mkern-1.5mu#1\mkern-1.5mu}\mkern 1.5mu}
\renewcommand\subsection{\@startsection{subsection}{2}%
  \z@{.5\linespacing\@plus.7\linespacing}{.5em}%
  {\normalfont\scshape\bfseries}}
\renewcommand{\@secnumfont}{\bfseries} 
\begin{document}


\title{Quasitoric representation of generalized braids}
\author{Neha Nanda, Manpreet Singh}

\address{LMNO, Université de Caen Normandie}
\email{nehananda94@gmail.com}

\address{University of South Florida, Tampa, FL, 33620}
\email{manpreet.math23@gmail.com}

\subjclass[2020]{57K10, 20F36}
\keywords{Knot, Braid,  Generalized knot theory, Generalized braid theory, Quasitoric braid, Alexander theorem}

\maketitle

\begin{abstract}
In this paper, we define generalized braid theories in alignment with the language of Fenn and Bartholomew \cite{MR4474096}  for knot theories, and compute a generating set for the pure generalized braid theories. Using this, we prove that every oriented normal generalized knot is the closure of a quasitoric normal generalized braid. Further, we prove that the set of quasitoric normal generalized braids forms a subgroup of normal generalized braid group.
\end{abstract}

\section{Introduction}
Classical knot theory studies knots as embedded, disjoint circles in Euclidean $3$-space, considered up to isotopy. The Alexander and Markov theorems are foundational results in this field, stating that every oriented link can be represented as the closure of a braid. Additionally, they establish that if a link is represented by two different braids (with possibly varying numbers of strands), the two braids are related by a finite sequence of {\em Markov moves}. Over the last few decades, many generalizations of knots and braid groups have been introduced, each of them an interest of its own. Fenn \cite{MR3381323} developed a meta-theory of knot theories which is termed as {\em $($normal$)$ generalized knot theories}, while Bartholomew-Fenn \cite{MR4474096} explored which of the Alexander-Markov theorems can be extended to these generalized theories. In continuation, we define generalized braid theories and compute the generating sets for the pure subgroups of generalized braid groups. These results align with the generating sets found in existing literature on generalized braid groups, such as virtual braid groups \cite{MR2128039}, virtual twin groups \cite{MR4651964}, unrestricted braid groups \cite{MR3415242}, welded braid groups \cite{MR873421,MR600411}, universal braid groups, and (extended) singular braid groups \cite{2022arXiv221208267B}.\\

Lamm \cite{MR1941420,2012arXiv1210.4543L} and Manturov \cite{MR1881552} independently proved that it suffices to consider a certain subclass of classical braids to encode isotopic links in the $3$-space via the Alexander theorem. Specifically, the notion of quasitoric braid generalizes toric braids which are braids whose closures form torus links in three-dimensional space, and it is proved that every oriented link is the closure of a quasitoric braid, and that the set of quasitoric braids with a fixed number of strands forms a subgroup of the braid group.The notion of a quasitoric representation of classical braids has been applied, for example, in the computation of the Casson invariant for integral homology 3-spheres obtained by performing Dehn surgery on specific knots and links \cite{MR2589233}. Additionally, knot invariants, homologies, and volume bounds have been explored in the context of weaving knots using this perspective \cite{MR4272646, MR3584259}. Recently, it was shown that any link can be represented by a diagram, viewed as a 4-valent graph embedded in the 2-sphere, where the faces of the graph are limited to triangles and quadrilaterals \cite{2023arXiv230814118S}. Similar to the braid index for the links, the quasitoric braid index of a link was introduced in \cite{MR3418559}, one application of this index is the result that the unknotting number of the knot $10_{139}$ is equal to $4$.\\

In this work, we define the notion of a quasitoric generalized braid, and prove that every generalized knot can be represented as the closure of a quasitoric generalized braid. In particular, the result holds for existing knot theories like classical knots, welded knots, free knots, singular knots, universal knots, virtual doodles, and recover the results for the case of virtual knots \cite{MR3335879}.  More recently, Genki \cite{doi:10.1142/S0218216524500469} computed the minimal generating sets and abelianization of the quasitoric braid group.\\

The goal of this paper is to provide a unified framework for the various braid theories and explore them in a general context. Specifically, we begin in Section \ref{Generalized-Braids-Section} by defining (regular and normal) generalized braid theory. In Section \ref{Pure-Generators-Section}, we compute a generating set for the pure regular generalized braid group (Theorem \ref{gPB_n-generators}). In Section \ref{Quasitoric-Braid-Section}, we introduce quasitoric generalized braids and prove that every oriented normal generalized link is the closure of a quasitoric normal generalized braid (Theorem \ref{Quasitoric-Main-Theorem}). Finally, we show that the set of quasitoric normal generalized braids forms a subgroup of the normal generalized braid group (Theorem \ref{Subgroup-Theorem}).

\section{Generalized braids}\label{Generalized-Braids-Section}
Consider $Q_n$ be a set of $n$ points in $\mathbb{R}$. A {\it generalized braid diagram} on $n$ strands is a subset $D$ of $\mathbb{R} \times [0,1]$ consisting of intervals called strands with boundary $\partial D = Q_n \times \{ 0 ,1\}$ satisfying:
\begin{enumerate}
\item[(i)] Each strand is monotonic.
\item[(ii)] The set of all crossings of the diagram $D$ consists of finitely many transverse double points of $D$ labeled by a tag indicated by a roman letter say `$a$'. This tag determines the \textit{crossing type} of the double point and how it behaves under the Reidemeister moves. Some tags come with \textit{glyph} (some decoration) such as the well-known breaking of arcs depicted under and over crossings.
\item[(iii)] Each tag has a positive version `$a$' and a negative version `$\bar{a}$', which may or may not be different. 
\item[(iv)] The diagrams are considered up to isotopy of the plane fixing the end points of the strands.
\end{enumerate} 

The {\it Reidemeister moves} or {\it $R$-moves} take one braid diagram to another in any of the ways shown in Figure \ref{R-Moves}. A {\it generalized braid theory} will define which of these moves are allowed and which are not. A {\it generalized braid} is an equivalence class of all braid diagrams related by a finite sequence of R-moves allowed in the corresponding generalized braid theory. We assume that there are finitely many tags in a generalized braid theory, and from now on we drop the term ``generalized" unless specified otherwise. 

\begin{figure}[tph]
    \centering
    \includegraphics[width=0.6\linewidth]{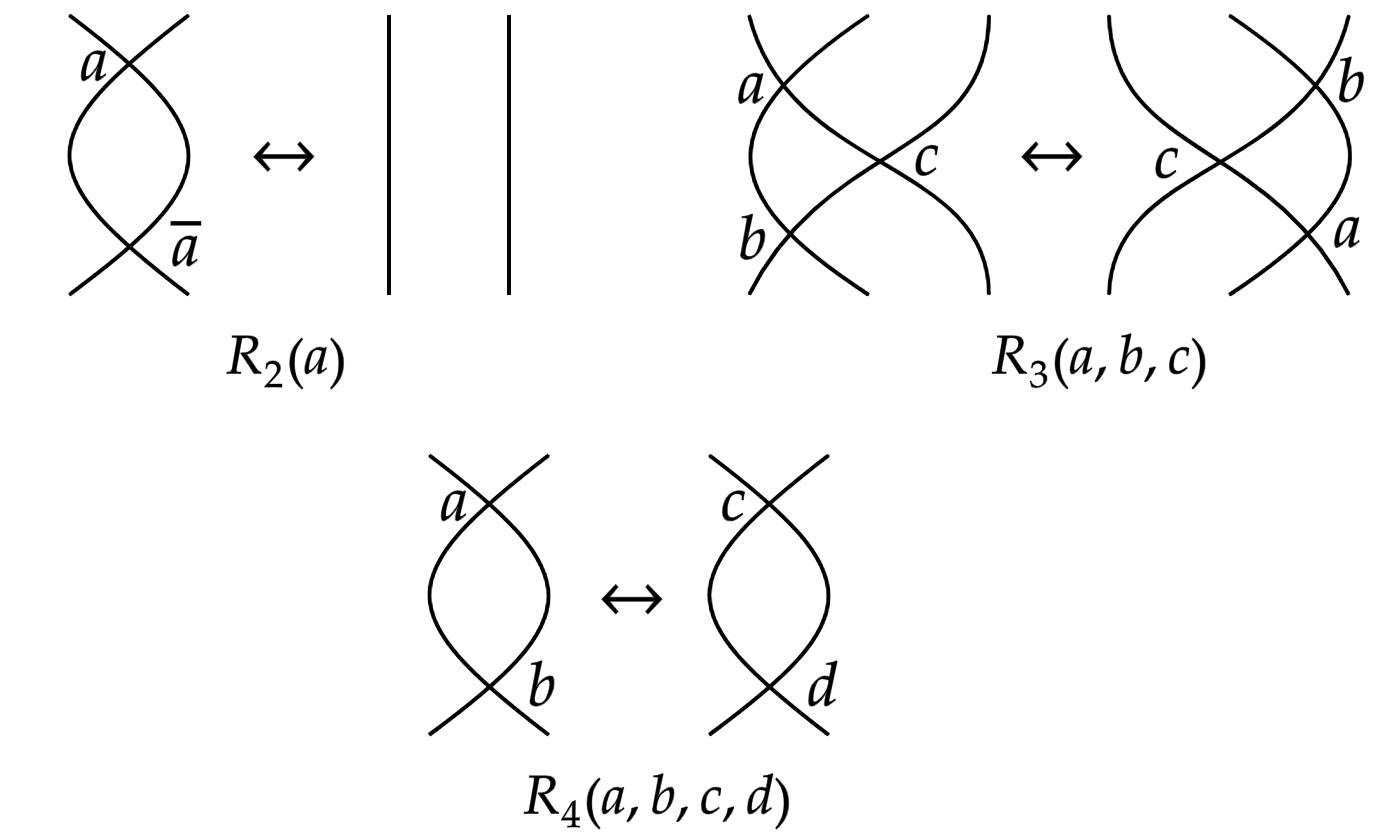}
    \caption{R-moves}
    \label{R-Moves}
\end{figure}

A braid theory is said to be {\it regular} if $\R_2(a)$ moves are allowed for all tags `$a$', and the braid is called {\it regular}.

\begin{proposition}
In a regular braid theory, the set $gB_n$ of all braids with $n$ strands forms a group under the operation of concatenation.
\end{proposition}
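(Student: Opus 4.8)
The plan is to verify the group axioms directly for $gB_n$ under concatenation, where the concatenation of two braid diagrams $D_1$ and $D_2$ is obtained by stacking $D_2$ on top of $D_1$ (identifying the top endpoints $Q_n \times \{1\}$ of $D_1$ with the bottom endpoints $Q_n \times \{0\}$ of $D_2$) and rescaling the resulting subset of $\mathbb{R} \times [0,2]$ back into $\mathbb{R} \times [0,1]$. First I would check that this operation is well defined on equivalence classes: since an R-move applied within $D_1$ (respectively $D_2$) is a local modification supported away from the gluing line, it extends to an R-move on the concatenation, so the product depends only on the R-move equivalence classes of the factors. I would also note that concatenation preserves the defining properties of a braid diagram---monotonicity of each strand is preserved because stacking monotone strands yields monotone strands, and the crossing set remains finite---so the product again lies in $gB_n$.

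Next I would establish associativity, which is immediate at the level of diagrams: stacking is associative up to the isotopy of the plane that reparametrizes the vertical coordinate, and such isotopies are exactly the equivalences of item (iv), so $(D_1 D_2) D_3$ and $D_1 (D_2 D_3)$ represent the same braid. The identity element is the trivial braid $\mathbb{1}_n$ consisting of $n$ vertical (crossing-free) strands; concatenating it on either side only lengthens each strand, and the resulting diagram is isotopic rel endpoints to the original, so $\mathbb{1}_n$ is a two-sided identity.

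The main step, and the only place regularity is used, is the existence of inverses. Given a braid diagram $D$, I would define $D^{-1}$ to be the mirror image of $D$ reflected across the horizontal line $\mathbb{R} \times \{1\}$, with every tag `$a$' replaced by its reverse `$\bar a$' (using item (iii)). The claim is that the concatenation $D \, D^{-1}$ is R-equivalent to $\mathbb{1}_n$. The key geometric picture is that in $D\,D^{-1}$ the crossings occur in mirror-symmetric pairs straddling the gluing line, each pair consisting of a crossing of type `$a$' immediately followed by its reverse `$\bar a$'. The hypothesis that the theory is regular means precisely that the move $R_2(a)$---which cancels such an adjacent $a$/$\bar a$ pair---is permitted for every tag `$a$'. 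I would cancel the innermost pair first and induct on the number of crossings, each cancellation being a legitimate $R_2$ move, until no crossings remain and the diagram has become $\mathbb{1}_n$; symmetrically $D^{-1} D$ reduces to $\mathbb{1}_n$.

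The part that requires genuine care---and which I expect to be the main obstacle---is making the inductive cancellation rigorous when the crossings of $D$ are not already stacked in a single column but are distributed across various heights and horizontal positions. One must argue that the crossing of $D$ nearest the gluing line and its mirror partner in $D^{-1}$ are in fact adjacent (no other crossing lies strictly between them), possibly after a planar isotopy of item (iv) that slides them together, so that $R_2(a)$ genuinely applies. This is a standard ``innermost cancellation'' argument, but it is the step where the monotonicity of strands and the finiteness of the crossing set from item (ii) are doing real work, since they guarantee a well-defined ordering of crossings by height along the braid that makes the induction terminate.
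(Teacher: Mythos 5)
Your proof is correct and rests on the same essential mechanism as the paper's: the trivial diagram is the identity, and regularity (availability of $R_2(a)$ for every tag) is invoked exactly once, to cancel mirror-paired crossings and produce inverses. The paper phrases the inverse construction algebraically---isotope the diagram so crossings occur at distinct heights, decompose it into elementary one-crossing braids $a_i^{\pm1}$, and invert the resulting word---which is precisely your mirror image with reversed tags; that same decomposition also dissolves the adjacency worry in your final paragraph, since after it the innermost pair at the gluing line is literally an adjacent $a_i a_i^{-1}$ to which $R_2(a)$ applies.
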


\begin{proof}
We begin by noting that the braid represented by a diagram of $n$ strands with no crossings is the identity element of the set $gB_n$ of regular braids. For each $i =1, 2, \dots , n-1$ and each tag $a$, let us define $a_i$ and $a_i^{-1}$ to be the regular braid represented by diagrams as in Figure \ref{Elementary-Braid}. For any arbitrary element $\beta$ in $gB_n$, it is easy to notice that $\beta$ can be expressed as composition of finitely many elementary braids with different tags. Since the braid theory is regular, for each tag $a$, the move $R_2(a)$ is allowed which implies that $a_i^{-1}$ is the inverse for $a_i$ for all $i$. As a result, the inverse $\beta^{-1}$ of each regular braid $\beta$ exists. 
\end{proof}

\begin{figure}[tph]
    \centering
    \includegraphics[width=0.8\linewidth]{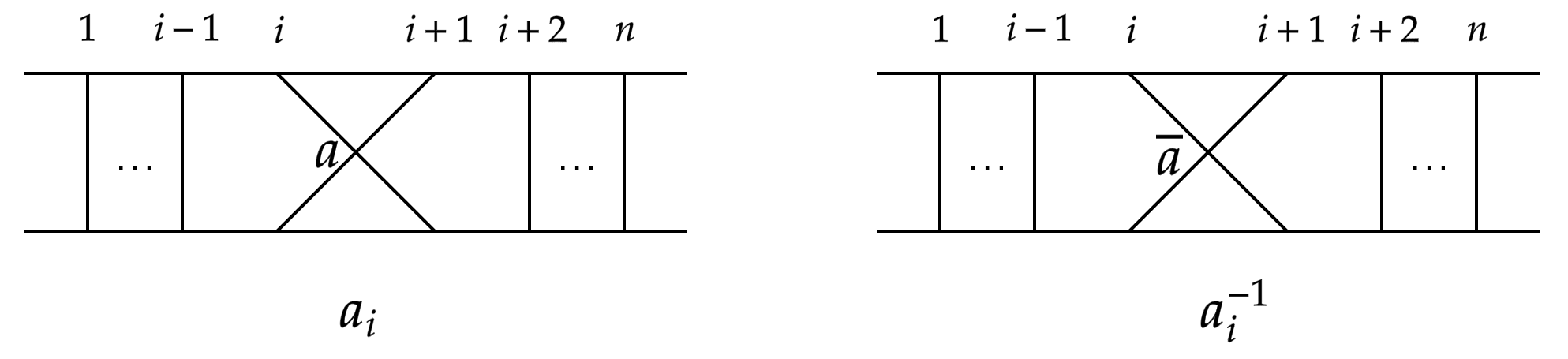}
    \caption{Elementary braids for the tag $a$}
    \label{Elementary-Braid}
\end{figure}

The kernel of the natural $\pi_n: gB_n \to S_n$ trailing end points of strands from top to bottom is called the {\it pure braid group} and is denoted by $gP_n$.\\

If there is some crossing type (tag) $x$ such that $\R_3(x, \overbar{x}, a)$ holds for some tag $a$, then we say that {\it $x$ dominates $a$}. If there is some crossing type $x$ such that $\R_3(\overbar{x},x,a)$ and $\R_3(x, \overbar{x}, a)$ holds for all tags $a$, then $x$ {\it dominates the theory}, see Figure \ref{x-Dominates}.

\begin{figure}[tph]
    \centering
    \includegraphics[width=0.3\linewidth]{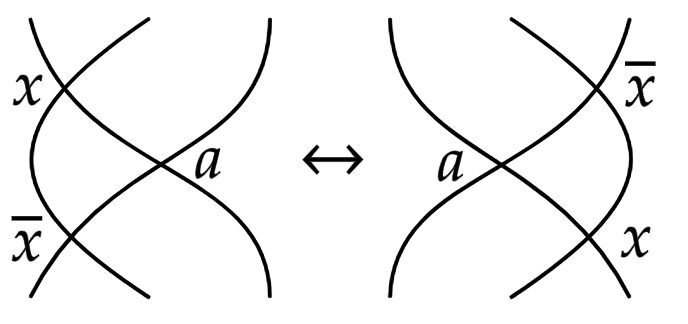}
    \caption{The tag $x$ dominates the tag $a$}
    \label{x-Dominates}
\end{figure}

A regular braid theory with a dominant tag say $x$ is called {\it normal}. 

\begin{example} Here are a few examples braid theories existing in the literature. 
\begin{itemize}
\item[(i)] {\it Artin braid group theory.}  The classical crossings $r$ and $\bar{r}$ have glyph of arc break depicted the over and under arcs in the diagram as shown in Figure \ref{Crossing-r}. It is a normal braid theory where both  $r$ and $\bar{r}$ dominates. 

\begin{figure}[tph]
    \centering
    \includegraphics[width=0.5\linewidth]{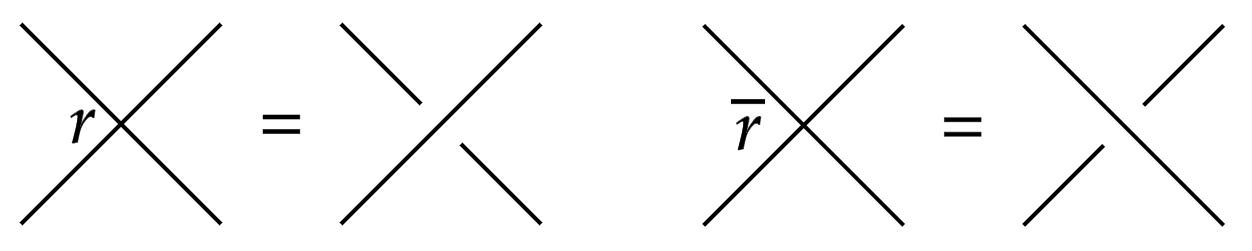}
    \caption{The positive and negative real crossings denoted by tag $r$ and $\bar{r}$}
    \label{Crossing-r}
\end{figure}

\item[(ii)] {\it Virtual braid group theory.} The virtual crossing type $v$ is depicted by glyph as shown in Figure \ref{Crossing-v}. The tag $v$ satisfy $R_2$-move and is involutive, hence $v$ is same as $\bar{v}$. It is a normal braid theory where $v$ is the dominant tag. It is to be noted that there is no other tag which dominates $v$.

\begin{figure}[tph]
    \centering
    \includegraphics[width=0.4\linewidth]{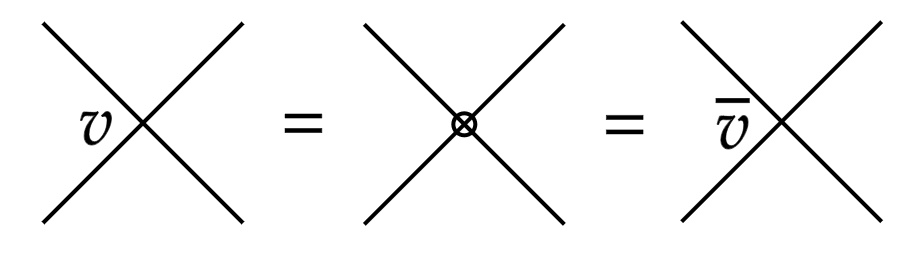}
    \caption{The glyph of virtual crossing with tag $v$}
    \label{Crossing-v}
\end{figure}

\item[(iii)] {\it Twin and virtual twin theory.} The real crossing type $t$, as shown in Figure \ref{Crossing-t} is involutive and satisfy $R_1$-move.  The twin braid group theory is regular but not normal. However, virtual twin group theory is normal with the crossing tag $v$ dominating all other tags. 

\begin{figure}[tph]
    \centering
    \includegraphics[width=0.3\linewidth]{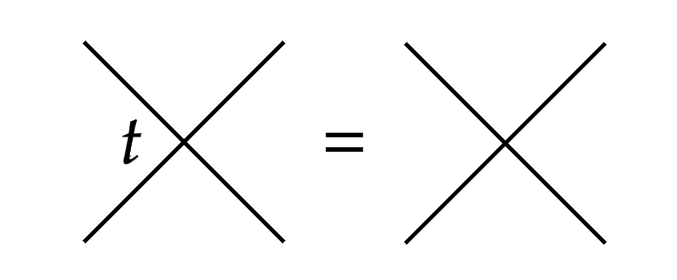}
    \caption{The twin crossing depicted by $t$}
    \label{Crossing-t}
\end{figure}
\end{itemize}
\end{example}

\begin{remark}
Apart from some examples given above, other known normal braid theories include welded $($or loop$)$ braids \cite{MR1257905}, unrestricted braids \cite{MR2128049}, flat braids \cite{MR2128049}, $($extended$)$ singular braids \cite{MR1654641}, virtual braids \cite{MR1721925} and virtual twins \cite{MR4027588}. Singular braid monoid \cite{MR1191478} is an example of non-regular braid theory.
\end{remark}

\begin{remark}
Throughout the paper, by braid theory we mean a normal generalized braid theory $($existence of dominant tag denoted by `x', there may not necessarily be only one $)$ and by braid group $gB_n$, we mean group associated to normal generalized braid theory on $n$ strands and the elementary braids $x_1, x_2, \dots , x_{n-1}$ are called the dominant generators. 
\end{remark}

\begin{remark}
The group $gB_n$ is generated by the set \[\{a_1, a_2, \dots, a_{n-1}, b_1, b_2, \dots, b_{n-1} , \dots, x_1, x_2, \dots, x_{n-1}, \dots \}\] for finitely many tags. Some of the following relations hold in $gB_n$:
\begin{eqnarray}
x_{i} x_{i+1} x_{i} &=& x_{i+1} x_{i} x_{i+1} \hspace*{5mm} \textrm{for } i = 1, 2, \dots, n-2, \label{1}\\ 
x_i x_j = &=& x_j x_i  \hspace*{5mm} \textrm{for } |i-j| >1, \label{2}\\
x_i a_j = &=& a_j x_i  \hspace*{5mm} \textrm{for } |i-j| >1 \textrm{ and for } i = 1, 2, \dots, n-2, \textrm{ and  for all tags, }\label{3}\\
x_{i} x_{i+1} a_{i} &=& a_{i+1} x_{i} x_{i+1} \hspace*{5mm} \textrm{for } i = 1, 2, \dots, n-2, \textrm{ and  for all tags. } \label{4}
\end{eqnarray}
\end{remark}

The following lemmas proved in  \cite{MR4474096} also hold for the braid theory. 
\begin{lemma}
In regular braid theory, the following statements are equivalent:
\begin{itemize}
\item[(i)] $x$ dominates $a$
\item[(ii)] $R_3(a, x, x)$ is allowed
\item[(iii)] $R_3(a, \bar{x}, \bar{x})$ is allowed
\item[(iv)] $R_3(x, a, x)$ is allowed
\item[(v)] $R_3(\bar{x}, a, \bar{x})$ is allowed.
\end{itemize}
\end{lemma}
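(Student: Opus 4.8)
The plan is to reduce everything to a local computation on three adjacent strands and then to pass between the five configurations using only the $R_2$-moves guaranteed by regularity. First I would record the algebraic content of each statement. Since $R_2(x)$ is allowed, the elementary braids satisfy $\bar x_i = x_i^{-1}$, and a canceling pair $x_i \bar x_i$ (or $\bar x_i x_i$) may be inserted or deleted anywhere in a diagram. Writing the dominant generators as $x_1,x_2$ and the $a$-crossings as $a_1,a_2$ on three strands, the five conditions become the identities $x_1 x_2^{-1} a_1 = a_2 x_1^{-1} x_2$ for (i), $a_1 x_2 x_1 = x_2 x_1 a_2$ for (ii), $a_1 x_2^{-1} x_1^{-1} = x_2^{-1} x_1^{-1} a_2$ for (iii), $x_1 a_2 x_1 = x_2 a_1 x_2$ for (iv), and $x_1^{-1} a_2 x_1^{-1} = x_2^{-1} a_1 x_2^{-1}$ for (v). Because every $R_3$-move may be read in either direction, it suffices to prove a single cycle of implications threading all five, for instance (i)$\Rightarrow$(iv)$\Rightarrow$(ii) together with the barred chain (i)$\Rightarrow$(v)$\Rightarrow$(iii), and then to close the loop.

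The mechanism for each implication is a detour (finger) move. To realize a target configuration from an assumed one, I would introduce a canceling pair of dominant crossings by $R_2$ immediately beside the $a$-crossing, thereby manufacturing the local pattern on which the hypothesised $R_3$-move acts; apply that move once; and then remove the surviving canceling pair by a second $R_2$, arriving at the target pattern. The switch between an $x$-crossing and an $\bar x$-crossing that distinguishes (ii)/(iv) from (iii)/(v) is produced precisely by which side of the $a$-crossing the canceling pair is introduced on, while reading the assumed move backwards (equivalently, rotating the local picture by a half-turn, which is a planar isotopy and hence free) interchanges the two halves of each equation. Carrying this out along the chain above converts (i) successively into (iv) and (ii), and the mirror bookkeeping converts it into (v) and (iii); reversing each step gives the converse implications, so the five statements are equivalent.

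The main obstacle is the diagrammatic bookkeeping in the generalized setting, where $x$ need not be a classical crossing. One must check that each finger move invokes nothing beyond the $R_2$-moves granted by regularity and the single assumed variant for the one tag $a$ --- in particular that neither the braid relation for $x$ nor the hypothesis that $x$ dominates itself is used --- and that the inserted and deleted crossings really carry the tags $x$ or $\bar x$ dictated by the over/under data, so that a reflection-induced change of sign lands exactly on the $\bar x$'s demanded by (iii) and (v). This is the content of the corresponding lemma in \cite{MR4474096}; since a braid diagram is just a knot diagram in which all strands are monotone, and monotonicity is preserved by the local $R_2$ and $R_3$ moves used above, that argument transfers verbatim to the present setting.
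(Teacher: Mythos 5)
The paper offers no proof of this lemma at all: it simply asserts that the lemmas proved in \cite{MR4474096} for knot diagrams also hold for braid diagrams. Your sketch reconstructs exactly the finger-move mechanism (insert a canceling pair of dominant crossings by $R_2$, apply the one assumed $R_3$ variant, cancel again) that underlies the cited lemma, and then correctly observes that the argument restricts to braid diagrams because monotonicity is preserved, so your proposal is correct and is essentially the same approach, only more explicit than what the paper provides.
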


The above lemma is depicted in Figure \ref{x-Dominates-Move}.
\begin{figure}[tph]
    \centering
    \includegraphics[width=0.6\linewidth]{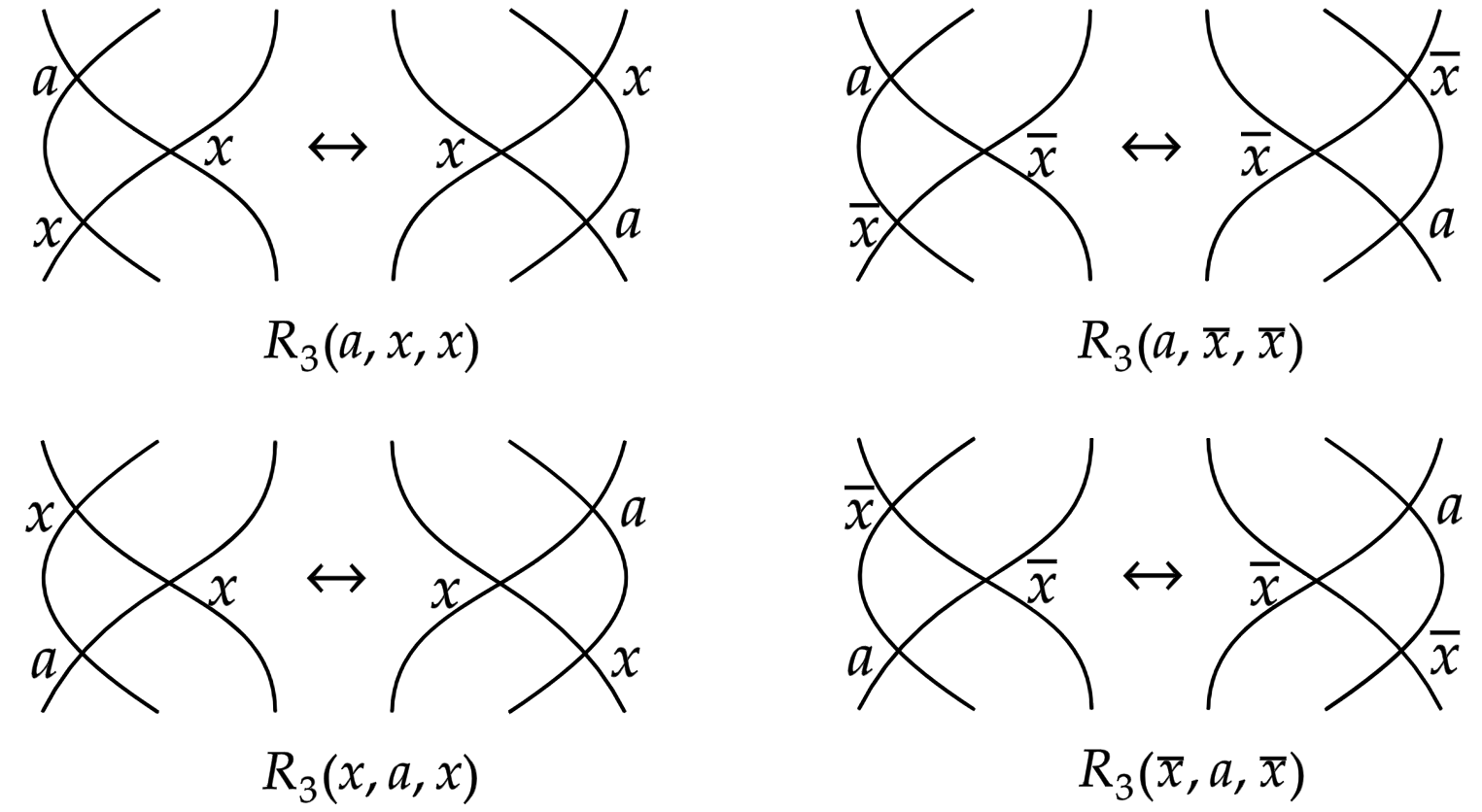}
    \caption{The allowed moves when the tag $x$ dominates the theory}
    \label{x-Dominates-Move}
\end{figure}

\begin{remark}
Let  $y_1, y_2, \dots y_q$ be arbitrary tags and the tags $x_1, x_2, x_3, x_3$ takes the values $x$ or $\bar{x}$ depending how the $R_2$ move is allowed. Then the following move holds in normal braid theory, which will be used throughout the paper. 

 \begin{figure}[tph]
    \centering
    \includegraphics[width=0.7\linewidth]{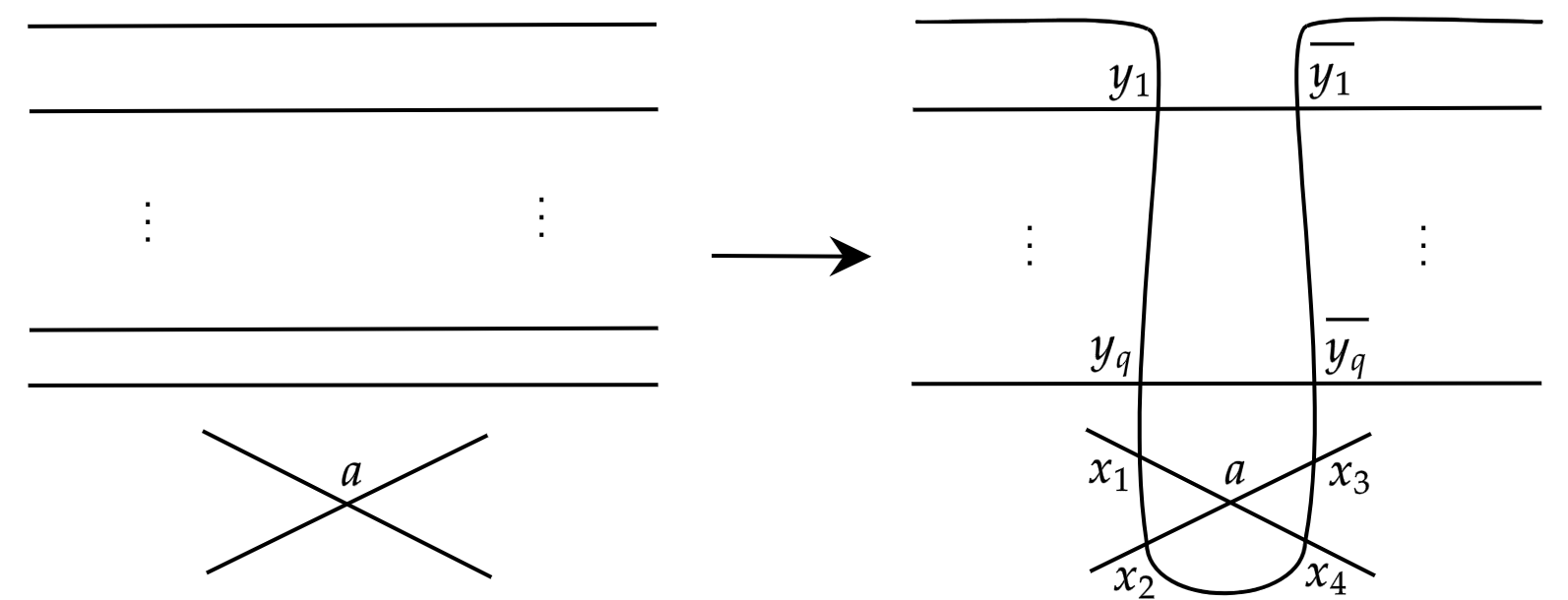}
    \caption{}
    \label{B-Move}
\end{figure}
\end{remark}

A subpath $P$ of a component of a braid diagram is said to be {\it $x$ above} if its end points are distinct from crossings and the only crossings it meets are of the two types illustrated below on the left of Figure \ref{x-Above-Below}. The portion of the subpath $P$ illustrated is drawn with a thicker line. Similarly $P$ is said to be {\it $x$ below} if the only crossings it meets are of the two types on the right of Figure  \ref{x-Above-Below}. The move in Figure \ref{B-Move} is used to define the {\it detour move} in the following lemma from  \cite{MR4474096}.

 \begin{figure}[tph]
    \centering
    \includegraphics[width=0.7\linewidth]{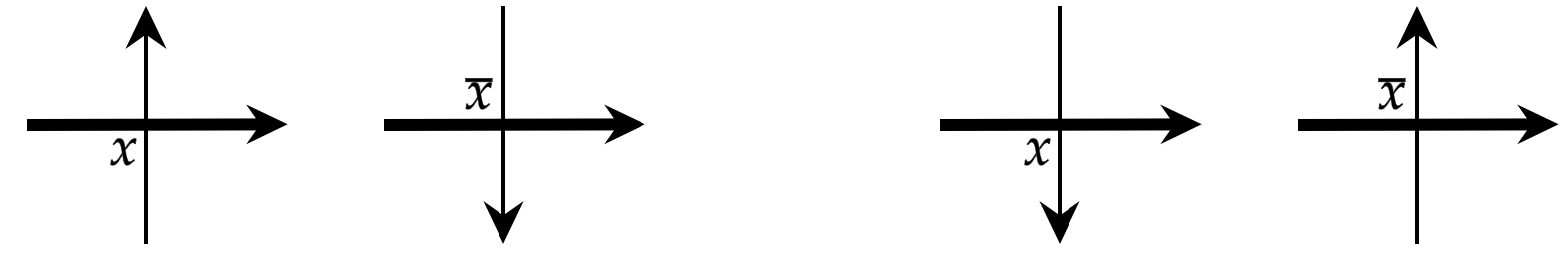}
    \caption{$x$ above and $x$ below paths}
    \label{x-Above-Below}
\end{figure}

\begin{lemma}[Detour move]
Let $P$ be an $x$ above/below subpath of a braided diagram $D$, where $x$ dominates the theory, and let $P'$ be a path with the same end points as $P$ which crosses $D$ in such a manner  as to create an $x$ above/below path. Then the diagrams $D$ and $(D-P) \cup P'$ are related by a sequence of $R$-moves.
\end{lemma}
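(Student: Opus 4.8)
The plan is to realize the passage from $D$ to $(D-P)\cup P'$ as a planar homotopy of the distinguished subpath, carried out relative to its two fixed endpoints while keeping the remainder of $D$ rigid, and then to cut this homotopy into finitely many \emph{elementary} steps, each implemented by a single $R$-move permitted in the theory. Since $P$ and $P'$ have the same endpoints and each is an $x$-above (respectively $x$-below) path, the two arcs cobound a region of the plane through which the subpath can be swept. The key observation is that throughout this sweep the moving arc meets $D$ only transversally and, by the definition of $x$-above/below recalled in Figure \ref{x-Above-Below}, every crossing that is created or destroyed is again of the dominant type $x$; hence at no stage do we leave the set of configurations for which the relevant moves are available.

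Next I would put the homotopy in general position so that, away from finitely many times, the moving arc is disjoint from the vertices of $D$ and meets its edges transversally, and at each exceptional time exactly one of the following local events occurs. \emph{(a)} The arc crosses a single strand of $D$: this is the birth or death of a cancelling pair of $x$-crossings, realized by an $R_2(x)$ move, allowed because the theory is regular. \emph{(b)} The arc is pulled across a crossing of $D$ carrying some tag $a$: here the two $x$-crossings of the arc with the two strands meeting at that crossing are transported from one side of the crossing to the other, which is precisely the move recorded in Figure \ref{B-Move}. This is legitimate because $x$ dominates the theory — equivalently, one applies one of the allowed configurations $R_3(x,\bar{x},a)$, $R_3(\bar{x},x,a)$, or the forms $R_3(a,x,x)$, $R_3(a,\bar{x},\bar{x})$, $R_3(x,a,x)$, $R_3(\bar{x},a,\bar{x})$ supplied by the domination lemma and depicted in Figure \ref{x-Dominates-Move}. \emph{(c)} The arc slides past itself, past a local extremum, or is isotoped without touching $D$ at all: such events are handled either by planar isotopy (item (iv) of the definition) or, when two $x$-crossings of the arc interact, by an $R_2(x)$ or an $R_3(x,\bar{x},x)$ among crossings all of type $x$, all of which are allowed since $x$ is dominant. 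The $x$-above and $x$-below cases are treated identically; they differ only in whether the moving arc passes over or under at each $x$-crossing, and the symmetric list of equivalent domination moves covers both.

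Concatenating the $R$-moves produced by the finitely many elementary events yields a finite sequence of allowed $R$-moves carrying $D$ to $(D-P)\cup P'$, as required. The main obstacle I expect is not any single move but the bookkeeping needed to guarantee that the homotopy can genuinely be made generic and decomposed into exactly these single events \emph{without ever being forced to create a non-$x$ crossing with $D$ or to drag an endpoint across a crossing}; this is exactly where the hypothesis that $x$ dominates the \emph{whole} theory is indispensable, since the arc may have to be pulled across crossings of every tag $a$, and only domination makes move \emph{(b)} available for all such $a$ simultaneously. A secondary technical point, special to the braided setting, is maintaining monotonicity of the strands throughout; because the detour is local and each elementary move is supported in a small disk, this can be arranged by choosing the intermediate diagrams generically.
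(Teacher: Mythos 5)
Your proposal is correct and follows essentially the same route as the paper's source for this lemma (the paper cites the proof from Bartholomew--Fenn rather than reproducing it): one sweeps the arc from $P$ to $P'$ relative to its endpoints, puts the sweep in general position, and realizes each elementary event by an $R_2(x)$ move or by the move of Figure \ref{B-Move}, whose validity is exactly what domination of $x$ and the equivalences of Figure \ref{x-Dominates-Move} provide. Your closing remarks on genericity and on preserving monotonicity of strands address the only braid-specific technical points, so no gap remains.
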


\section{Generating set of pure braid group}\label{Pure-Generators-Section}

In this section, we give a generating set for pure braid group $gPB_n$. We shall use the generating set of $gB_n$ defined before and the Reidemeister-Schreier method \cite[Theorem 2.6]{MR207802}. For each $1 \leq k \leq n-1$, set 
\begin{equation*}
m_{k, i_k}:=
\begin{cases}
x_k x_{k-1}\dots x_{i_{k}+1} & \text{ for }  0 \leq i_k < k, \\
1 & \text{ for }    i_k = k, \\
\end{cases}
\end{equation*}
and 
\[\M_n := \big\{ m_{1, i_1}m_{2, i_2}\dots m_{n-1, i_{n-1}} ~| ~0 \leq i_k \leq k  \text{ for each } 1 \leq k \leq n-1   \big\}\]

as the Schreier system of coset representatives of $gPB_n$ in $gB_n$. For an element $w \in gB_n$, let $\overline{w}$ denote the unique coset representative of the coset of $w$ in the Schreier set $\M_n$. \\
By Reidemeister-Schreier method, the group $gPB_n$ is generated by the set
\[\big\{ \gamma( \mu, g) = (\mu g) (\overline{\mu g})^{-1} ~|~ \mu \in \M_n \text{ and } g \in \{a_1, \dots, a_{n-1}, b_1, \dots, b_{n-1}, \cdots, x_1, \dots, x_{n-1}, \cdots
\} \big\}.\]

We set
\begin{align*}
\vphantom{\lambda}_{a} \lambda_{i, i+1}&= a_i x_i,\\
\vphantom{\lambda}_{a} \lambda_{i+1, i}&= x_i a_i,\\
\vphantom{\lambda}_{x} \lambda_{i+1, i}&= x_i^2,
\end{align*}
for each $1 \le i \le n-1$ and 
\begin{align*}
\vphantom{\lambda}_{a}\lambda_{i,j} &= x_{j-1}^{-1} x_{j-2}^{-1} \dots x_{i+1}^{-1}\vphantom{\lambda}_{a} \lambda_{i, i+1} x_{i+1} \dots x_{j-2}  x_{j-1},\\
\vphantom{\lambda}_{a}\lambda_{j,i} &= x_{j-1}^{-1} x_{j-2}^{-1} \dots x_{i+1}^{-1}\vphantom{\lambda}_{a} \lambda_{i+1, i} x_{i+1} \dots x_{j-2}  x_{j-1},\\
\vphantom{\lambda}_{x}\lambda_{i,j} &= x_{j-1}^{-1} x_{j-2}^{-1} \dots x_{i+1}^{-1}\vphantom{\lambda}_{x} \lambda_{i, i+1} x_{i+1} \dots x_{j-2}  x_{j-1},
\end{align*}
for each $1 \leq i < j \leq n$ and $j \ne i+1$. Further, the tag $a$ means any crossing type in the braid theory corresponding to $gB_n$ except the dominant tag $x$. These notations will be used throughout this section.\\

Let \[\mathcal{S}= \big\{ \vphantom{\lambda}_{a}\lambda_{i, j}, \vphantom{\lambda}_{a}\lambda_{j, i}, \vphantom{\lambda}_{x} \lambda_{i,j} ~|~  1 \leq i< j \leq n  \text{ and all tags } a \text{ with dominant tag }  x \big\}.\]
\medskip

Let $\beta_i = \{a_i x_i^{-1}, x_i a_i \}$ for $i=1, 2, \dots, n-1$ and all tags $a$, and $X = \{ x_1, x_2, \dots, x_{n-1}\}$. We begin by proving the following result for the Reidemeister-Schreier generator.

\begin{lemma}\label{Generating-Set-Proof-Lemma-One}
The element  $\gamma( \mu, g)$ belongs to the set $\beta_i^{X}$, for $\mu \in \M_n$ and $g \in \{a_i, x_i \}$ for $i=1, 2, \dots, n-1$, all tags $a$ and dominant tag $x$.
\end{lemma}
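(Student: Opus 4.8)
The plan is to exploit two features of the Schreier transversal $\M_n$: first, every coset representative is a positive word in the dominant generators, so $\mu \in \langle X \rangle$; and second, under $\pi_n \colon gB_n \to S_n$ every elementary braid $g \in \{a_i, x_i\}$ maps to the same transposition $s_i = (i,i+1)$, since a crossing always interchanges the two strands meeting at it. Consequently $\pi_n(\mu a_i) = \pi_n(\mu x_i) = \pi_n(\mu)\,s_i$, and because a right coset of $gPB_n = \Ker \pi_n$ is determined by its image in $S_n$, the two representatives coincide: $\overline{\mu a_i} = \overline{\mu x_i} =: \nu$. Thus it suffices to locate $\nu \in \M_n$ relative to $\mu$ and then read off $\gamma(\mu,g) = (\mu g)\nu^{-1}$. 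Write $\sigma := \pi_n(\mu) \in S_n$.

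The key tool is that the Schreier normal form is a \emph{reduced} expression. A direct count shows the word length of $m_{1,i_1}\cdots m_{n-1,i_{n-1}}$ equals $\sum_k (k-i_k)$, which is precisely the number of inversions of $\sigma$; hence $\mu$ is a reduced $x$-word for $\sigma$ and $\nu$ is a reduced $x$-word for $\sigma s_i$. Since the dominant generators obey the braid relations \eqref{1} and \eqref{2} inside $gB_n$, Matsumoto's (Tits') theorem applies: any two reduced words in the $x_i$ representing the same permutation are carried into one another by \eqref{1}--\eqref{2}, and therefore represent the same element of $\langle X \rangle \subseteq gB_n$. This is the device that lets us replace the combinatorially awkward product $\mu g$ by a clean conjugate.

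With this in hand I would split on whether $s_i$ is a right ascent or a right descent of $\sigma$ (exactly one holds, as $\sigma$ is a bijection). If $\sigma(i) < \sigma(i+1)$, then $\ell(\sigma s_i) = \ell(\sigma)+1$, so $\mu x_i$ is a reduced word for $\sigma s_i$; being reduced for the same permutation as $\nu$, it equals $\nu$ in $gB_n$. Hence $\gamma(\mu, x_i) = \mu x_i\,\nu^{-1} = 1$ and $\gamma(\mu, a_i) = \mu a_i\,\nu^{-1} = \mu a_i x_i^{-1}\mu^{-1} = \mu\,(a_i x_i^{-1})\,\mu^{-1}$, an $X$-conjugate of $a_i x_i^{-1} \in \beta_i$. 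If instead $\sigma(i) > \sigma(i+1)$, then $\ell(\sigma s_i) = \ell(\sigma)-1$, so $\nu x_i$ is a reduced word for $\sigma$; being reduced for the same permutation as $\mu$, it equals $\mu$ in $gB_n$, i.e. $\mu = \nu x_i$. Then $\gamma(\mu, x_i) = \nu x_i\,x_i\,\nu^{-1} = \nu\,x_i^2\,\nu^{-1}$ and $\gamma(\mu, a_i) = \nu x_i\,a_i\,\nu^{-1} = \nu\,(x_i a_i)\,\nu^{-1}$, which are $X$-conjugates of $x_i^2 = x_i x_i$ and of $x_i a_i \in \beta_i$ respectively (here $x_i^2$ is the dominant-tag instance of $x_i a_i$). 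In every case $\gamma(\mu,g)$ is a word in $X$ conjugating a member of $\beta_i$, hence lies in $\beta_i^X$.

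The routine parts are the inversion count and the bookkeeping of the two cases. The step that deserves the most care --- and the one I expect to be the main obstacle --- is the reduced-word argument: one must verify that the Schreier normal form is genuinely length-minimal for $\sigma$, and then invoke the Matsumoto/Tits principle in the correct form for $gB_n$, namely that passage between reduced $x$-expressions never requires the Coxeter relation $x_i^2 = 1$ (which fails here) but only the braid and far-commutation relations \eqref{1}--\eqref{2}, which genuinely hold among the dominant generators. Once this is secured, identifying $\nu$ with $\mu x_i$ in the ascent case and $\mu$ with $\nu x_i$ in the descent case is immediate, and the conjugate forms of $\gamma(\mu,g)$ follow at once.
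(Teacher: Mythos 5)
Your proposal is correct and follows essentially the same route as the paper: both establish that the Schreier representatives are reduced $x$-words via the inversion count, then use the fact that two reduced expressions for the same permutation are related by relations \eqref{1}--\eqref{2} and hence coincide in $gB_n$, and finally split into the two cases of whether appending $x_i$ lengthens or shortens $\mu$ (your descent case $\mu = \nu x_i$ is the exchange-condition repackaging of the paper's deletion of the letter $x_{i_j}$). Your write-up is slightly more careful in naming the Matsumoto/Tits principle and in treating $g = x_i$ explicitly, but the underlying argument is the same.
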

\begin{proof}
We begin by claiming that the words in the set $\M_n$ are reduced words of the symmetric group $S_n$, generated by simple transpositions $\{ x_1, x_2, \dots, x_{n-1} \}$. The elements of $S_n$ can be viewed as end-points fixing homotopy classes of configurations of $n$ strands in $\mathbb{R} \times [0,1]$ connecting $n$ points on $\mathbb{R} \times \{1\}$ to $n$ points on $\mathbb{R} \times \{0\}$. It is well-known that a word $w \in S_n$ is reduced if and only if the length $l(w)$ of the word $w$ is equal to the number of inversions $\I(w)$ of the word $w$, where $\I(w)= | \{ i < j ~|~ w(i)> w(j)\}|$ . Consider $\rho_n = m_{1, i_1}m_{2, i_2}\dots m_{n-1, i_{n-1}} \in \M_n$. It is easy to check that the element $m_{1, i_1}$ is reduced. Let us suppose that the word $\rho_k=m_{1, i_1}m_{2, i_2}\dots m_{k, i_{k}}$ is reduced, and we prove that $\rho_{k+1}$ is reduced. Note that the element $\rho_k$ does not involve any strands from $k+1$ to $n$, as shown in Figure \ref{Permutation-Reduced}. So 
\[\I(\rho_{k+1}) = \I(\rho_{k})  + \text{ the } kth \text{ strand crossing the first } k \text{ strands from right to left}.\]
This implies 
\[\I(\rho_{k+1}) = l(\rho_{k}) + (k-i_k).\]
That is, 
\[\I(\rho_{k+1}) = \I(\rho_{k}) + (k-i_k)= l(\rho_{k+1}).\]
 \begin{figure}[tph]
    \centering
    \includegraphics[width=0.15\linewidth]{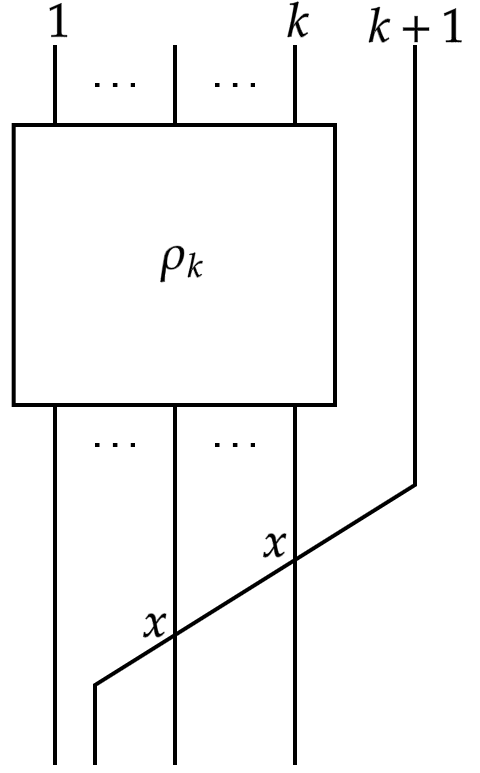}
    \caption{} 
    \label{Permutation-Reduced}
\end{figure}

This justifies our claim. Now let us consider $\mu= x_{i_1}x_{i_2} \dots x_{i_k}$ and \[\gamma( x_{i_1}x_{i_2} \dots x_{i_k}, a_i)=  (x_{i_1}x_{i_2} \dots x_{i_k} a_i) (\overline{x_{i_1}x_{i_2} \dots x_{i_k} a_i})^{-1}.\]
Suppose that $a_i$ does not get cancelled in the word $\overline{x_{i_1}x_{i_2} \dots x_{i_k} a_i}$, then $\overline{x_{i_1}x_{i_2} \dots x_{i_k} a_i}$ is reduced in $S_n$, and we have 
\[(x_{i_1}x_{i_2} \dots x_{i_k} a_i) (\overline{x_{i_1}x_{i_2} \dots x_{i_k} a_i})^{-1}= (x_{i_1}x_{i_2} \dots x_{i_k} a_i) (\overline{x_{i_1}x_{i_2} \dots x_{i_k} x_i})^{-1}.\]
Since, $x_{i_1}x_{i_2} \dots x_{i_k} x_i$ is reduced, then $x_{i_1}x_{i_2} \dots x_{i_k} x_i$ and $\overline{x_{i_1}x_{i_2} \dots x_{i_k} x_i}$ are related by relations \ref{1} and \ref{2} in $gB_n$ and $S_n$. So we have
\[\gamma( x_{i_1}x_{i_2} \dots x_{i_k}, a_i)=  x_{i_1}x_{i_2} \dots x_{i_k} (a_i x_i^{-1}) x_{i_k}^{-1} \ldots x_{i_2}^{-1} x_{i_1}^{-1}.\]
Therefore, $\gamma( x_{i_1}x_{i_2} \dots x_{i_k}, a_i) \in \beta_i^{X}$.\\
Now suppose that $a_i$ cancels with $x_{i_j}$ in the word $\overline{x_{i_1}x_{i_2} \dots x_{i_k} a_i}$, so we have 
\[\overline{x_{i_1}x_{i_2} \dots x_{i_k} a_i}= \overline{x_{i_1}x_{i_2} \dots x_{i_{j-1}} x_{i_{j+1}} \dots x_{i_k}}\]
which is a reduced word in $S_n$. Now $\overline{x_{i_1}x_{i_2} \dots x_{i_{j-1}} x_{i_{j+1}} \dots x_{i_k}}$ and ${x_{i_1}x_{i_2} \dots x_{i_{j-1}} x_{i_{j+1}} \dots x_{i_k}}$ differs by relations \ref{1} and \ref{2} in $gB_n$ and $S_n$.
\begin{align*}
\gamma( x_{i_1}x_{i_2} \dots x_{i_k}, a_i) &= {x_{i_1}x_{i_2} \dots x_{i_k} a_i} (\overline{x_{i_1}x_{i_2} \dots x_{i_k} a_i})^{-1} = x_{i_1}x_{i_2} \dots x_{i_k} a_i (\overline{x_{i_1}x_{i_2} \dots x_{i_{j-1}} x_{i_{j+1}} \dots x_{i_k}})^{-1}\\
&= x_{i_1}x_{i_2} \dots x_{i_{j-1}} x_{i_{j+1}} \dots x_{i_k} x_{i_j} a_i (\overline{x_{i_1}x_{i_2} \dots x_{i_{j-1}} x_{i_{j+1}} \dots x_{i_k}})^{-1}\\
&= (x_{i_1}x_{i_2} \dots x_{i_{j-1}} x_{i_{j+1}} \dots x_{i_k}) x_{i_j} a_i (\overline{x_{i_1}x_{i_2} \dots x_{i_{j-1}} x_{i_{j+1}} \dots x_{i_k}})^{-1}\\
&= (x_{i_1}x_{i_2} \dots x_{i_{j-1}} x_{i_{j+1}} \dots x_{i_k}) x_{i} a_i (\overline{x_{i_1}x_{i_2} \dots x_{i_{j-1}} x_{i_{j+1}} \dots x_{i_k}})^{-1}\\
&= (x_{i_1}x_{i_2} \dots x_{i_{j-1}} x_{i_{j+1}} \dots x_{i_k}) x_{i} a_i (x_{i_1}x_{i_2} \dots x_{i_{j-1}} x_{i_{j+1}} \dots x_{i_k})^{-1}.
\end{align*}

\end{proof}

\begin{lemma}\label{Generating-Set-Proof-Lemma-Two}
The set $\mathcal{S}^X \in \langle \mathcal{S}  \rangle$, that is, the conjugates of the set $\mathcal{S}$ by $X= \langle x_1, x_2, \dots, x_{n-1} \rangle$ belongs to the subgroup generated by $\mathcal{S}$.
\end{lemma}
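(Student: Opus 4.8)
The plan is to prove the stronger statement that the subgroup $\langle\mathcal{S}\rangle$ is stable under conjugation by every element of $X$. Since $X=\langle x_1,\dots,x_{n-1}\rangle$, it is enough to check that for each generator $s\in\mathcal{S}$ and each $k\in\{1,\dots,n-1\}$ the two conjugates $x_k\,s\,x_k^{-1}$ and $x_k^{-1}\,s\,x_k$ lie in $\langle\mathcal{S}\rangle$; stability under an arbitrary word in the $x_k^{\pm1}$, and hence under all of $X$, then follows by induction on the length of that word. I would treat the family ${}_a\lambda_{i,j}$ in full and observe that ${}_a\lambda_{j,i}$ and ${}_x\lambda_{i,j}$ are governed by the same recipe, with the base letters $x_i a_i$ and $x_i^2$ replacing $a_i x_i$; crucially, relations \eqref{3} and \eqref{4} are available \emph{for every tag}, so the argument for a general tag $a$ is formally identical to the one for the dominant tag $x$, the latter being nothing but the classical pure braid computation.

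First I would record the far cases. Writing ${}_a\lambda_{i,j}=W^{-1}(a_i x_i)\,W$ with $W=x_{i+1}x_{i+2}\cdots x_{j-1}$, the letters appearing in ${}_a\lambda_{i,j}$ are $a_i,x_i,x_{i+1},\dots,x_{j-1}$. If $k\le i-2$ or $k\ge j+1$, then $x_k$ commutes letter-by-letter with ${}_a\lambda_{i,j}$ by relations \eqref{2} and \eqref{3}, so $x_k\,{}_a\lambda_{i,j}\,x_k^{-1}={}_a\lambda_{i,j}\in\mathcal{S}$, and the same holds for $x_k^{-1}$. This disposes of all but the finitely many values $k\in\{i-1,i,\dots,j\}$.

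The heart of the proof is these remaining values, where $x_k$ genuinely meets the block $\{i,\dots,j\}$. The strategy is to move $x_k$ through the conjugating word $W$ and through the base letter $a_i x_i$, using the braid relation \eqref{1} among the dominant generators, the far-commutations \eqref{2}--\eqref{3}, and the mixed relation \eqref{4}, until the conjugate is displayed as a conjugate of a base letter by a word of the standard shape $x_{i'+1}\cdots x_{j'-1}$, i.e. as another element of $\mathcal{S}$, possibly multiplied by factors already seen to be in $\langle\mathcal{S}\rangle$. Concretely, I expect the strictly interior values $i+2\le k\le j-2$ to be absorbed into $W$ by \eqref{1}--\eqref{2} and to return the generator ${}_a\lambda_{i,j}$ unchanged; the values near $i$, namely $k=i-1,i,i+1$, to shift the inner index and produce ${}_a\lambda_{i-1,j}$ or ${}_a\lambda_{i+1,j}$ (the latter via \eqref{4} applied to the base letter); and the values $k=j-1,j$ near $j$ to shift the outer index and produce ${}_a\lambda_{i,j-1}$ or ${}_a\lambda_{i,j+1}$. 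Whenever a shift reverses the roles of the two indices, the output simply lands in the companion family ${}_a\lambda_{j',i'}$, which is part of $\mathcal{S}$ by construction. The conjugates by $x_k^{-1}$ are obtained by rearranging the same relations, since each of \eqref{1}--\eqref{4} may be solved for the inverse generator just as well.

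I expect the genuine obstacle to be the boundary subcases $k\in\{i,\,j-1\}$ (and $k=i+1$), where applying \eqref{4} or a braid relation to the base letter leaves a residual word that is not yet in standard form; reorganizing it into an honest product of $\lambda$-generators, with the index bookkeeping correct and with no stray generator outside $\mathcal{S}$ surviving, is the delicate step, and here I would also invoke the detour move available because $x$ dominates the theory. Since these computations are the term-by-term generalization of Artin's classical formulas for $\sigma_k A_{ij}\sigma_k^{-1}$ in the pure braid group, I would organize the verification to parallel that classical calculation, substituting \eqref{1}--\eqref{4} and the detour move for the Artin relations throughout, and check that the family ${}_x\lambda_{i,j}$ (whose base letter $x_i^2$ involves only dominant generators) reproduces the classical identities verbatim.
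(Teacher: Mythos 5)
Your plan follows the paper's proof essentially step for step: both arguments reduce the lemma to checking $x_k^{\pm1}\,s\,x_k^{\mp1}\in\langle\mathcal{S}\rangle$ for each $s\in\mathcal{S}$ and each $k$, dispose of the far values of $k$ by the commutations \eqref{2}--\eqref{3}, and handle the remaining values by pushing $x_k$ through the conjugating word $x_{j-1}^{-1}\cdots x_{i+1}^{-1}$ and the base letter using \eqref{1} and \eqref{4}, in parallel with Artin's classical conjugation formulas. So there is no difference of route to report; the issue is that you stop short of the only cases in which the lemma has content.

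Concretely, the boundary cases you flag as ``delicate'' and leave as expectations are exactly where the conjugate is \emph{not} a single $\lambda$-generator with shifted indices. The paper's computation gives, for $j\neq i+1$, $x_{i-1}^{-1}\,{}_a\lambda_{i,j}\,x_{i-1}={}_x\lambda_{i,j}\cdot{}_a\lambda_{i-1,j}\cdot{}_x\lambda_{i,j}^{-1}$ and $x_{j-1}^{-1}\,{}_a\lambda_{i,j}\,x_{j-1}={}_x\lambda_{j-1,j}^{-1}\cdot{}_a\lambda_{i,j-1}\cdot{}_x\lambda_{j-1,j}$, and for $k=i=j-1$ one gets ${}_a\lambda_{i,i+1}$ conjugated by $x_i^2={}_x\lambda_{i+1,i}$. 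The ``residual word'' you anticipate is thus a conjugating pair of ${}_x\lambda$'s, produced by inserting cancelling pairs such as $x_{i-1}x_{i-1}^{-1}$ so that the patterns of \eqref{1} and \eqref{4} appear; this is precisely the reason the dominant-tag generators ${}_x\lambda_{i,j}$ must belong to $\mathcal{S}$ at all, and a proof that asserts the residual ``can be reorganized'' without exhibiting these identities has not established the lemma. Two smaller corrections: the detour move plays no role here---the verification is purely algebraic from \eqref{1}--\eqref{4} and the definition ${}_x\lambda_{i+1,i}=x_i^2$---and for $k$ strictly between $i+1$ and $j-2$ the generator is returned unchanged not by absorption into $W$ alone but by a braid-relation exchange $x_k^{-1}x_{k+1}^{-1}x_k^{-1}\cdots x_k x_{k+1}x_k = x_{k+1}^{-1}x_k^{-1}x_{k+1}^{-1}\cdots x_{k+1}x_kx_{k+1}$ inside the conjugating word, which is worth writing out once.
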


\begin{proof}
We analyse the conjugation action of $x_k$ on the set $\mathcal{S}$. Let us first consider $\vphantom{\lambda}_{a} \lambda_{i,i+1}$, $\vphantom{\lambda}_{a} \lambda_{i+1, i}$ and $\vphantom{\lambda}_{x} \lambda_{i,i+1}$ for $i=1, 2, \dots, n-1$. 

\begin{itemize}
\item[(i)] If $ 1 \leq k \leq i-2$ or $i+2 \leq k \leq n-1$, then
\begin{align*}
x_k^{-1} \vphantom{\lambda}_{a} \lambda_{i,i+1} x_k &= \vphantom{\lambda}_{a} \lambda_{i,i+1}.\\
x_k^{-1} \vphantom{\lambda}_{a} \lambda_{i+1, i} x_k &= \vphantom{\lambda}_{a} \lambda_{i+1, i}.\\
x_k^{-1} \vphantom{\lambda}_{x} \lambda_{i,i+1} x_k &= \vphantom{\lambda}_{x} \lambda_{i,i+1}.
\end{align*}

\item[(ii)] If $ k= i-1$, then we have 
\begin{align*}x_k^{-1} \vphantom{\lambda}_{a} \lambda_{i,i+1} x_k &= \vphantom{\lambda}_{a} \lambda_{i-1,i+1},\\
x_k^{-1} \vphantom{\lambda}_{a} \lambda_{i+1, i} x_k &= \vphantom{\lambda}_{a} \lambda_{i+1, i-1},\\
x_k^{-1} \vphantom{\lambda}_{x} \lambda_{i,i+1} x_k &= \vphantom{\lambda}_{x} \lambda_{i-1,i+1},
\end{align*}
since
\begin{align*}
x_k^{-1} \vphantom{\lambda}_{a} \lambda_{i,i+1} x_k &= x_{i-1}^{-1} \vphantom{\lambda}_{a} \lambda_{i,i+1} x_{i-1}\\
&= x_{i-1}^{-1} a_i x_i  x_{i-1} = x_{i-1}^{-1} a_i x_{i-1}x_{i-1}^{-1} x_i  x_{i-1} = x_{i} a_{i-1} x_{i}^{-1} x_{i} x_{i-1} x_{i}^{-1}\\
&= x_{i} a_{i-1}x_{i-1} x_{i}^{-1} = \vphantom{\lambda}_{a} \lambda_{i-1,i+1}.
\end{align*}
 
\item[(iii)] If $k=i$, then
\begin{align*}
x_k^{-1} \vphantom{\lambda}_{a} \lambda_{i,i+1} x_k &= \vphantom{\lambda}_{x} \lambda_{i,i+1}^{-1} \cdot \vphantom{\lambda}_{a} \lambda_{i,i+1} \cdot \vphantom{\lambda}_{x} \lambda_{i,i+1}\\
x_k^{-1} \vphantom{\lambda}_{a} \lambda_{i+1, i} x_k &=  \vphantom{\lambda}_{a} \lambda_{i+1, i}\\
x_k^{-1} \vphantom{\lambda}_{x} \lambda_{i, i+1} x_k &=  \vphantom{\lambda}_{x} \lambda_{i, i+1}.
\end{align*}

\item[(iii)] If $k=i+1$, then
\begin{align*}x_k^{-1} \vphantom{\lambda}_{a} \lambda_{i,i+1} x_k &= \vphantom{\lambda}_{a} \lambda_{i,i+2},\\
x_k^{-1} \vphantom{\lambda}_{a} \lambda_{i+1, i} x_k &= \vphantom{\lambda}_{a} \lambda_{i+2, i},\\
x_k^{-1} \vphantom{\lambda}_{x} \lambda_{i,i+1} x_k &= \vphantom{\lambda}_{x} \lambda_{i,i+2},
\end{align*}

\end{itemize}
\medskip

Next, consider $\vphantom{\lambda}_{a} \lambda_{i,j}$, $\vphantom{\lambda}_{a} \lambda_{j, i}$ and $\vphantom{\lambda}_{x} \lambda_{i,j}$ for $ 1 \leq i < j \leq n-1$ and $j \neq i+1$. 

\begin{itemize}  
\item[(i)] If $1 \leq k \leq i-2$ or $j+1 \leq k \leq n-1$, then
\[x_k \lambda x_k ^{-1}= \lambda\]
for all $\lambda \in \mathcal{S}$. 
\medskip
\item[(ii)] For $k=i-1$, we have 
\begin{align*}
x_{i-1}^{-1} \vphantom{\lambda}_{a} \lambda_{i,j} x_{i-1} &=   {\vphantom{\lambda}_{x} \lambda_{i,j}} \cdot \vphantom{\lambda}_{a} \lambda_{i-1,j} \cdot \vphantom{\lambda}_{x} \lambda_{i,j}^{-1},\\
x_{i-1}^{-1} \vphantom{\lambda}_{a} \lambda_{j,i} x_{i-1} &=   {\vphantom{\lambda}_{x} \lambda_{i,j}} \cdot \vphantom{\lambda}_{a} \lambda_{j, i-1} \cdot \vphantom{\lambda}_{x} \lambda_{i,j}^{-1},\\
x_{i-1}^{-1} \vphantom{\lambda}_{x} \lambda_{i,j} x_{i-1} &=   {\vphantom{\lambda}_{x} \lambda_{i,j}} \cdot \vphantom{\lambda}_{x} \lambda_{i-1,j} \cdot \vphantom{\lambda}_{x} \lambda_{i,j}^{-1},
\end{align*}
since
\begin{align*}
x_{i-1}^{-1} \vphantom{\lambda}_{a} \lambda_{i,j} x_{i-1} &= x_{i-1}^{-1} x_{j-1}^{-1} x_{j-2}^{-1} \dots x_{i+1}^{-1} \vphantom{\lambda}_{a}  \lambda_{i, i+1} x_{i+1} \dots x_{j-2} x_{j-1}x_{i-1}\\
&= x_{i-1}^{-1} x_{j-1}^{-1} x_{j-2}^{-1} \dots x_{i+1}^{-1} a_i x_i x_{i+1}\dots x_{j-2} x_{j-1} x_{i-1} \\
&=  x_{j-1}^{-1} x_{j-2}^{-1} \dots  x_{i+1}^{-1} x_{i-1}^{-1} a_i x_i x_{i-1} x_{i+1}  \dots x_{j-2} x_{j-1}\\
&=  x_{j-1}^{-1} x_{j-2}^{-1} \dots  x_{i+1}^{-1} \underline{x_{i-1}^{-1} a_i x_{i-1}}~\underline{x_{i-1}^{-1}x_i x_{i-1}} x_{i+1}  \dots x_{j-2} x_{j-1}\\
&=  x_{j-1}^{-1} x_{j-2}^{-1} \dots  x_{i+1}^{-1} x_{i}a_{i-1} x_{i}^{-1} x_{i} x_{i-1} x_{i}^{-1} x_{i+1} \dots x_{j-2} x_{j-1}\\
&=  x_{j-1}^{-1} x_{j-2}^{-1} \dots  x_{i+1} ^{-1} x_{i}a_{i-1} x_{i-1} x_{i}^{-1} x_{i+1}  \dots x_{j-2} x_{j-1}\\
&=  (x_{j-1}^{-1} \dots  x_{i+1}^{-1} x_{i}^{2}x_{i+1}  \dots x_{j-1})(x_{j-1}^{-1} \dots  x_{i}^{-1} a_{i-1}x_{i-1}x_{i} \dots x_{j-1}) (x_{j-1}^{-1} \dots  x_{i+1}^{-1} x_{i}^{-2}x_{i+1}  \dots x_{j-1})\\
&= {\vphantom{\lambda}_{x} \lambda_{i,j}} \cdot \vphantom{\lambda}_{a} \lambda_{i-1,j} \cdot \vphantom{\lambda}_{x} \lambda_{i,j}^{-1}.
\end{align*}
\medskip
\item[(iii)] For $k=i$, we have
\begin{align*}
x_{i}^{-1} \vphantom{\lambda}_{a} \lambda_{i,j} x_{i} &=   \vphantom{\lambda}_{a} \lambda_{i+1,j},\\
x_{i}^{-1} \vphantom{\lambda}_{a} \lambda_{j,i} x_{i} &=   \vphantom{\lambda}_{a} \lambda_{j, i+1},\\
x_{i}^{-1} \vphantom{\lambda}_{x} \lambda_{i,j} x_{i} &= \vphantom{\lambda}_{x} \lambda_{i+1,j},
\end{align*}
since
\begin{align*}
x_{i}^{-1} \vphantom{\lambda}_{a} \lambda_{i,j} x_{i} &= x_{i}^{-1} x_{j-1}^{-1} x_{j-2}^{-1} \dots x_{i+1}^{-1} \vphantom{\lambda}_{a}  \lambda_{i, i+1} x_{i+1} \dots x_{j-2} x_{j-1}x_{i}\\
&= x_{i}^{-1} x_{j-1}^{-1} x_{j-2}^{-1} \dots x_{i+1}^{-1} a_i x_i x_{i+1}\dots x_{j-2} x_{j-1} x_{i} \\
&=  x_{j-1}^{-1} x_{j-2}^{-1} \dots  \underline{x_{i}^{-1} x_{i+1}^{-1} a_i}~\underline{ x_i x_{i+1} x_{i}}  \dots x_{j-2} x_{j-1}\\
&=  x_{j-1}^{-1} x_{j-2}^{-1} \dots  a_{i+1} x_{i}^{-1} x_{i+1}^{-1} x_{i+1}x_{i}x_{i+1}  \dots x_{j-2} x_{j-1}\\
&=  x_{j-1}^{-1} x_{j-2}^{-1} \dots x_{i+2}^{-1} a_{i+1}x_{i+1} x_{i+2}  \dots x_{j-2} x_{j-1}\\
&=  \vphantom{\lambda}_{a} \lambda_{i+1,j}.
\end{align*}

\item[(iv)] If $ i+1 \leq k \leq j-2$, then we have 
\begin{align*}
x_{k}^{-1} \vphantom{\lambda}_{a} \lambda_{i,j} x_{k} &=   \vphantom{\lambda}_{a} \lambda_{i,j},\\
x_{k}^{-1} \vphantom{\lambda}_{a} \lambda_{j,i} x_{k} &=   \vphantom{\lambda}_{a} \lambda_{j, i},\\
x_{k}^{-1} \vphantom{\lambda}_{x} \lambda_{i,j} x_{k} &= \vphantom{\lambda}_{x} \lambda_{i,j},
\end{align*}
since 
\begin{align*}
x_k^{-1}  \vphantom{\lambda}_{a} \lambda_{i,j} x_k &= x_k^{-1} x_{j-1}^{-1} \dots x_{k+1}^{-1} x_{k}^{-1} \dots x_{i+1}^{-1}  \vphantom{\lambda}_{a} \lambda_{i, i+1} x_{i+1} \dots x_k x_{k+1} \dots x_{j-1} x_k \\
&=  x_{j-1}^{-1} \dots x_k^{-1}x_{k+1}^{-1} x_{k}^{-1} \dots x_{i+1}^{-1}  \vphantom{\lambda}_{a} \lambda_{i, i+1} x_{i+1} \dots x_k x_{k+1} x_k
\dots x_{j-1}  \\
&=  x_{j-1}^{-1} \dots x_{k+1}^{-1}x_{k}^{-1} x_{k+1}^{-1} x_{k-1}^{-1} \dots x_{i+1}^{-1}  \vphantom{\lambda}_{a} \lambda_{i, i+1} x_{i+1} \dots x_{k-1} x_{k+1} x_{k} x_{k+1}
\dots x_{j-1}  \\
&=  x_{j-1}^{-1} \dots x_{k+1}^{-1}x_{k}^{-1} x_{k-1}^{-1} \dots x_{i+1}^{-1}  \vphantom{\lambda}_{a} \lambda_{i, i+1} x_{i+1} \dots x_{k-1} x_{k} x_{k+1}
\dots x_{j-1}  \\
&=  \vphantom{\lambda}_{a} \lambda_{i,j}.
\end{align*}

\item[(v)] If $k= j-1$, then 
\begin{align*}
x_k^{-1}  \vphantom{\lambda}_{a} \lambda_{i,j} x_k  &= \vphantom{\lambda}_{x} \lambda_{j-1, j}^{-1}\cdot  \vphantom{\lambda}_{a} \lambda_{i,j-1} \cdot \vphantom{\lambda}_{x} \lambda_{j-1, j}.\\
x_k^{-1}  \vphantom{\lambda}_{a} \lambda_{j,i} x_k  &= \vphantom{\lambda}_{x} \lambda_{j-1,j}^{-1}\cdot  \vphantom{\lambda}_{a} \lambda_{j-1, i} \cdot \vphantom{\lambda}_{x} \lambda_{j-1, j}.\\
x_k^{-1}  \vphantom{\lambda}_{x} \lambda_{i,j} x_k  &= \vphantom{\lambda}_{x} \lambda_{j-1, j}^{-1}\cdot  \vphantom{\lambda}_{x} \lambda_{i,j-1} \cdot \vphantom{\lambda}_{x} \lambda_{j-1, j}.
\end{align*}

\item[(vi)] If $k= j$, then 
\begin{align*}
x_k^{-1}  \vphantom{\lambda}_{a} \lambda_{i,j} x_k  &=   \vphantom{\lambda}_{a} \lambda_{i,j+1}.\\
x_k^{-1}  \vphantom{\lambda}_{a} \lambda_{j,i} x_k  &=   \vphantom{\lambda}_{a} \lambda_{j+1, j}.\\
x_k^{-1}  \vphantom{\lambda}_{x} \lambda_{i,j} x_k  &=   \vphantom{\lambda}_{x} \lambda_{i,j+1}.
\end{align*}
\end{itemize}
\end{proof}

We now state the main theorem of this section.

\begin{theorem}\label{gPB_n-generators}
The pure normal generalized braid group $gPB_n$ on  $n \ge 2 $ strands is generated by \[\mathcal{S}= \big\{ \vphantom{\lambda}_{a}\lambda_{i, j}, \vphantom{\lambda}_{a}\lambda_{j, i}, \vphantom{\lambda}_{x} \lambda_{i,j} ~|~  1 \leq i< j \leq n  \text{ and all tags } a \text{ with dominant tag }  x \big\}.\]
\end{theorem}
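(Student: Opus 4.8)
The plan is to apply the Reidemeister-Schreier method together with the two lemmas already established. By the Reidemeister-Schreier theorem, the pure braid group $gPB_n$ is generated by the set $\{\gamma(\mu,g)\}$ where $\mu$ ranges over the Schreier coset representatives $\M_n$ and $g$ ranges over the generators $\{a_1,\dots,a_{n-1},\dots,x_1,\dots,x_{n-1}\}$ of $gB_n$. The strategy is to show that every such Reidemeister-Schreier generator lies in the subgroup $\langle \mathcal{S} \rangle$, and conversely that $\mathcal{S} \subseteq gPB_n$, so that $\langle \mathcal{S} \rangle = gPB_n$.

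First I would observe that each element of $\mathcal{S}$ is genuinely a pure braid: the words $\vphantom{\lambda}_{a}\lambda_{i,i+1}=a_ix_i$, $\vphantom{\lambda}_{a}\lambda_{i+1,i}=x_ia_i$, and $\vphantom{\lambda}_{x}\lambda_{i,i+1}=x_i^2$ each induce the trivial permutation in $S_n$ (since $a_i$ and $x_i$ induce the same transposition), and conjugation by the $x$'s preserves this property, so $\mathcal{S} \subseteq gPB_n$ and hence $\langle \mathcal{S}\rangle \subseteq gPB_n$. For the reverse inclusion, Lemma \ref{Generating-Set-Proof-Lemma-One} shows that each generator $\gamma(\mu,g)$ with $g \in \{a_i,x_i\}$ belongs to $\beta_i^{X}$, the set of $X$-conjugates of $\beta_i=\{a_ix_i^{-1},x_ia_i\}$. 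Next I would note that the elements of $\beta_i$ are, up to the already-available relations, exactly the base-case elements $\vphantom{\lambda}_{a}\lambda_{i,i+1}$, $\vphantom{\lambda}_{a}\lambda_{i+1,i}$, $\vphantom{\lambda}_{x}\lambda_{i+1,i}$ of $\mathcal{S}$ (one must account for $a_ix_i^{-1}$ versus $a_ix_i$, which differ by the element $x_i^2 = \vphantom{\lambda}_{x}\lambda_{i+1,i} \in \mathcal{S}$); thus $\beta_i \subseteq \langle \mathcal{S}\rangle$ modulo these identifications.

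The final step is to combine this with Lemma \ref{Generating-Set-Proof-Lemma-Two}, which asserts that $\mathcal{S}^X \subseteq \langle \mathcal{S}\rangle$, i.e. the set $\mathcal{S}$ is closed under conjugation by $X$ up to passing to the generated subgroup. Since every Reidemeister-Schreier generator lies in $\beta_i^X \subseteq \mathcal{S}^X \subseteq \langle \mathcal{S}\rangle$, the entire Reidemeister-Schreier generating set is contained in $\langle \mathcal{S}\rangle$, and therefore $gPB_n = \langle \mathcal{S}\rangle$. The $n=2$ base case should be checked separately (or noted to fall out directly), since there $\M_n$ and the index set degenerate.

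I expect the main obstacle to be the careful bookkeeping in the two reductions rather than any conceptual difficulty: specifically, verifying that the $X$-orbit of each $\beta_i$ under the conjugation action described in Lemma \ref{Generating-Set-Proof-Lemma-Two} reaches precisely the labelled generators $\vphantom{\lambda}_{a}\lambda_{i,j}$, $\vphantom{\lambda}_{a}\lambda_{j,i}$, $\vphantom{\lambda}_{x}\lambda_{i,j}$ for all index pairs $i<j$, and that the conjugating words chosen in Lemma \ref{Generating-Set-Proof-Lemma-One} to move $\gamma(\mu,g)$ into $\beta_i^X$ are consistent with the Schreier representatives. The delicate point is handling the case where the generator $g$ cancels against a letter of $\mu$ in forming $\overline{\mu g}$, which is exactly the situation the second half of Lemma \ref{Generating-Set-Proof-Lemma-One} addresses; assembling these cases into a clean statement that $gPB_n$ is generated by $\mathcal{S}$ is then essentially a matter of citing the two lemmas.
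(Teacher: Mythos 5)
Your proposal is correct and follows essentially the same route as the paper's proof: invoke the Reidemeister--Schreier generating set, use Lemma \ref{Generating-Set-Proof-Lemma-One} to place each $\gamma(\mu,g)$ in $\beta_i^{X}$, observe that $\beta_i\subseteq\langle\mathcal{S}\rangle$ (via the $x_i^2$ adjustment), and close up under $X$-conjugation by Lemma \ref{Generating-Set-Proof-Lemma-Two}. Your additional remarks (the reverse inclusion $\mathcal{S}\subseteq gPB_n$ and the degenerate $n=2$ case) are harmless refinements of the same argument.
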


\begin{proof}
By Reidemeister-Schreier method, the group $gPB_n$ is generated by the set
\[\big\{ \gamma( \mu, g) = (\mu g) (\overline{\mu g})^{-1} ~|~ \mu \in \M_n \text{ and } g \in \{a_1, \dots, a_{n-1}, b_1, \dots, b_{n-1}, \cdots, x_1, \dots, x_{n-1}, \cdots
\} \big\}.\]
We first note that $\beta_i$ is a subset of group generated by $\mathcal{S}$. Then, by Lemmas \ref{Generating-Set-Proof-Lemma-One} and \ref{Generating-Set-Proof-Lemma-Two} , we have $$ \gamma( \mu, g) \in \beta_i^X \subset \langle \mathcal{S} \rangle^X \subset \langle \mathcal{S} \rangle,$$ for some $i \in \{1, 2, \dots, n-1 \}$. Geometrically, the generators look as in Figure \ref{Pure-Generator}.
 \begin{figure}[tph]
    \centering
    \includegraphics[width=0.9\linewidth]{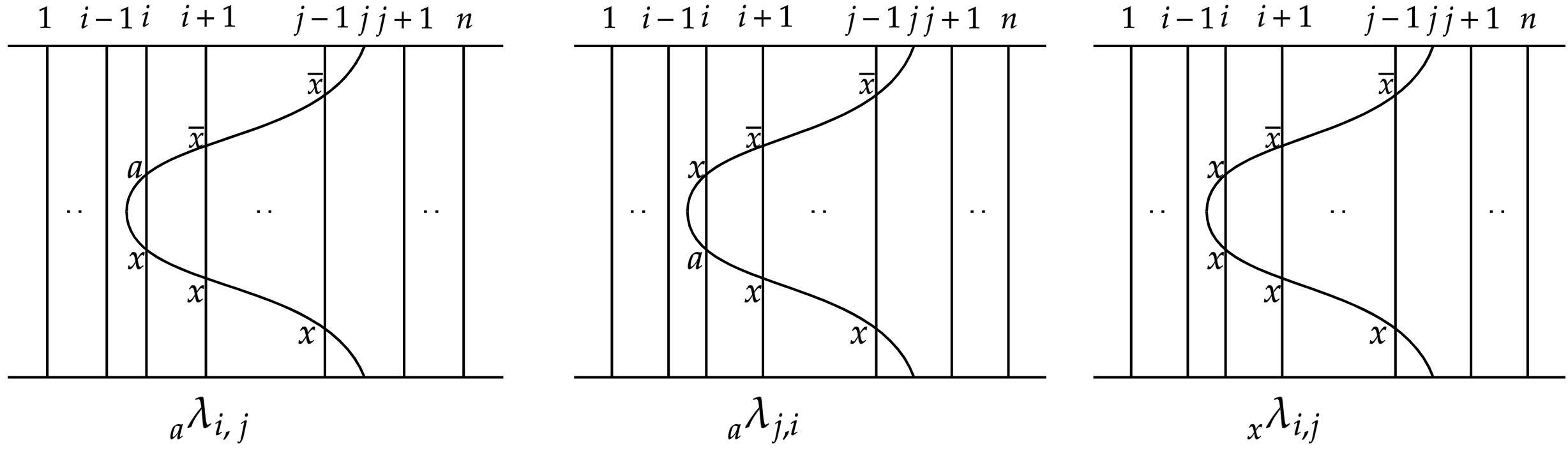}
    \caption{The generators $ \vphantom{\lambda}_{a}\lambda_{i, j}, \vphantom{\lambda}_{a}\lambda_{j, i}$ and $ \vphantom{\lambda}_{x} \lambda_{i,j} $ for all tags $a$ and dominant tag $x$.} 
    \label{Pure-Generator}
\end{figure}
\end{proof}

\section{Quasitoric generalized braids}\label{Quasitoric-Braid-Section}

In this section, we introduce the definition of a quasitoric generalized braid. \\
Let $p,q$ be positive integers. A generalized braid $\beta$ is said to be {\it quasitoric of type $(p,q)$} if it can be expressed as $\beta= \beta_1 \beta_2 \ldots \beta_q$, where $\beta_j=y_{j,p-1} y_{j,p-2} \ldots y_{j,1}$, where $y_{j,i} \in \{a_i, \overbar{a_i}, b_i, \overbar{b_i}, \ldots \}$. A quasitoric generalized braid of type $(p,q)$ is shortly called {\it $p$-quasitoric braid}. For example, see Figure \ref{Example-Quasitoric-Braid}.

 \begin{figure}[tph]
    \centering
    \includegraphics[width=0.15\linewidth]{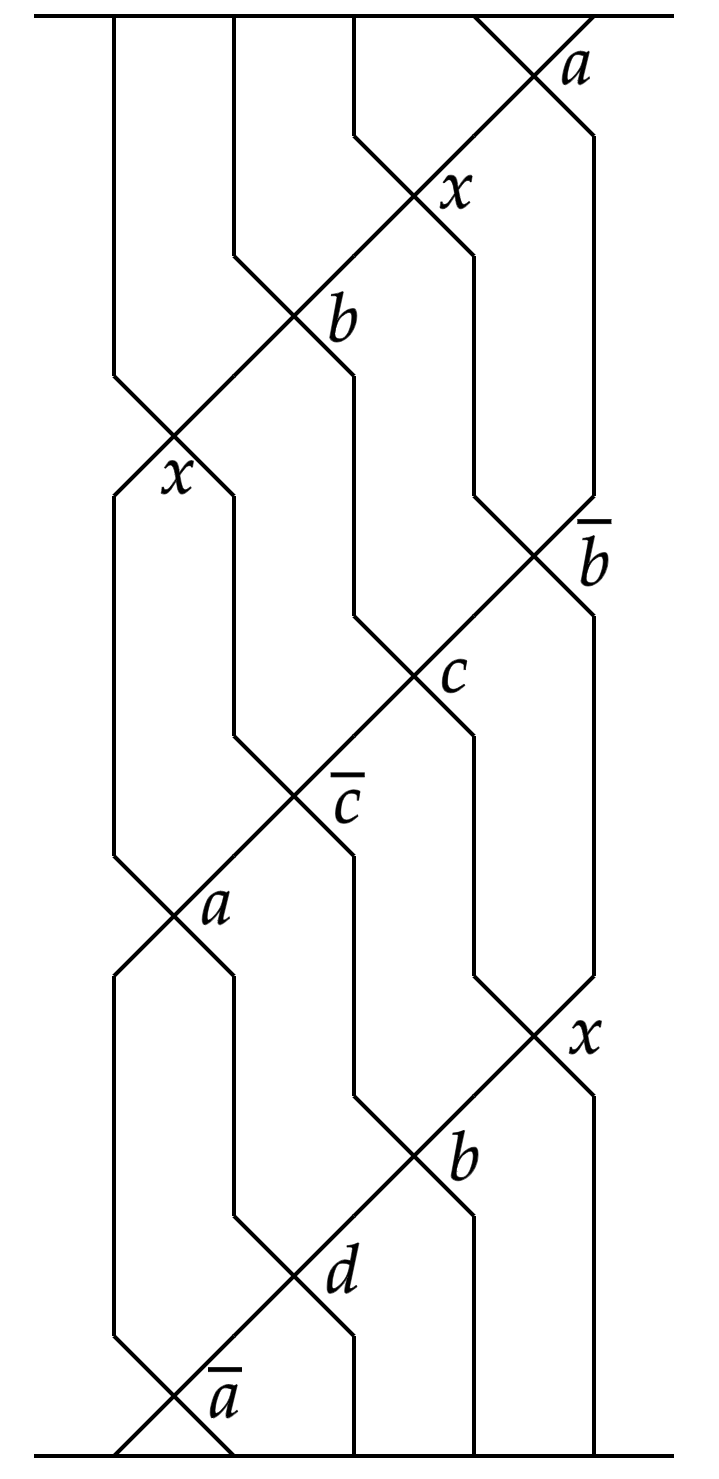}
    \caption{A $5$-quasitoric braid given by $\beta=(a_4 x_3 b_2 x_1)(b_4^{-1} c_3 c_2^{-1} a_1)(x_4 b_3 d_2 a_1^{-1})$}
    \label{Example-Quasitoric-Braid}
\end{figure}

For positive integers $i,j$ with $1 \leq i < j \leq n$, an $n$ strand braid $\beta$ is called {\it $(i,j)$-quasitoric braid with $n$ strands} if it has a braid diagram of the form shown in Figure \ref{(i,j)-Quasitoric-Braid}, where $\beta'$ is a $(j-i+1)$-quasitoric braid.

 \begin{figure}[tph]
    \centering
    \includegraphics[width=0.2\linewidth]{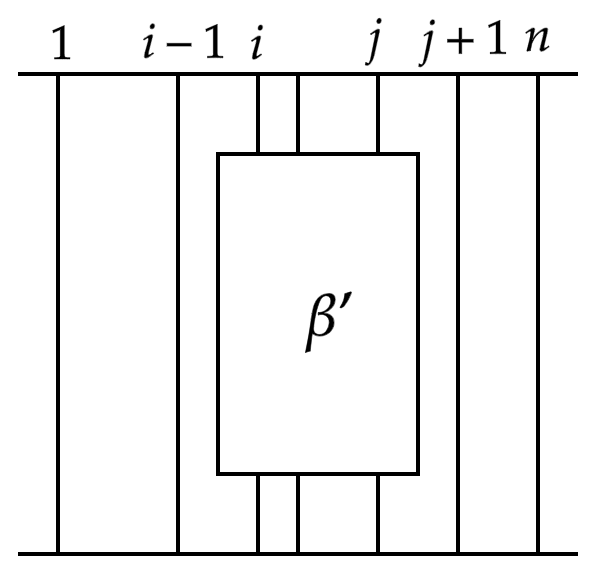}
    \caption{$(i,j)$-quasitoric braid on $n$ strands} 
    \label{(i,j)-Quasitoric-Braid}
\end{figure}

\begin{remark}
If a $(p,q)$-quasitoric braid $\beta$ is pure, then it is easy to prove that $q$ is a multiple of $p$.
\end{remark}

Let $qgB_n$ be the set of all quasitoric generalized braids on $n$ strands. We first prove that the identity element of the braid group $gB_n$ is quasitoric.

\begin{lemma}\label{lem:id_is_quasitoric}
For all $n \geq 2$, the identity element of the braid group $gB_n$ can be expressed as an $n$-quasitoric braid.
\end{lemma}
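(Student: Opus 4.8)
The plan is to exhibit the identity explicitly as a product of $2n$ descending rows built from the dominant generators, so that the quasitoric structure is manifest. Write $\delta = x_{n-1} x_{n-2} \cdots x_1$ and $\delta' = x_{n-1}^{-1} x_{n-2}^{-1} \cdots x_1^{-1}$ (recall that in a regular theory $x_i^{-1} = \overbar{x_i}$). Each of $\delta$ and $\delta'$ is a descending product meeting the positions $n-1, n-2, \ldots, 1$ exactly once, so each qualifies as a legal row $\beta_j = y_{j,n-1}\cdots y_{j,1}$ with $y_{j,i} \in \{x_i, \overbar{x_i}\}$. I claim that
\[
1 = \delta^{\,n}\,(\delta')^{\,n},
\]
which is then, by definition, an $n$-quasitoric braid of type $(n, 2n)$: the first $n$ rows are equal to $\delta$ and the last $n$ rows are equal to $\delta'$.

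To verify the claim I would first observe that by relations \eqref{1} and \eqref{2} the dominant generators $x_1, \ldots, x_{n-1}$ satisfy exactly the defining relations of the classical braid group $B_n$; hence $\sigma_i \mapsto x_i$ extends to a homomorphism $B_n \to gB_n$, and it suffices to establish the corresponding identity $(\sigma_{n-1}\cdots\sigma_1)^n (\sigma_{n-1}^{-1}\cdots\sigma_1^{-1})^n = 1$ in $B_n$ and push it forward. In $B_n$ this is standard: the element $(\sigma_{n-1}\cdots\sigma_1)^n$ is the full twist $\Delta^2$, which is central, while the generator-inverting automorphism $\sigma_i \mapsto \sigma_i^{-1}$ (which sends $\delta \mapsto \delta'$ and, by an exponent-sum count, $\Delta^2 \mapsto \Delta^{-2}$) shows $(\sigma_{n-1}^{-1}\cdots\sigma_1^{-1})^n = \Delta^{-2}$. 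Their product is $\Delta^2 \Delta^{-2} = 1$.

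The point requiring care, and the only genuine obstacle, is the asymmetry built into the definition of a row: a row must traverse the positions in \emph{decreasing} order. The naive inverse $\delta^{-1} = x_1^{-1} x_2^{-1} \cdots x_{n-1}^{-1}$ is an ascending word and is therefore not a legal row, so one cannot cancel $\delta^{\,n}$ by writing $\delta^{-n}$ row by row. The substance of the argument is precisely that the descending \emph{negative} word $\delta'$ satisfies $(\delta')^{\,n} = \delta^{-n}$; this is exactly what the centrality of the full twist provides, and it is what makes the inverse full twist realizable as a genuine quasitoric block. Once this is in hand, the cases $n = 2$ (where $\delta = x_1$, giving $x_1^2 x_1^{-2} = 1$) and $n = 3$ (giving $(x_2 x_1)^3 (x_2^{-1} x_1^{-1})^3 = 1$) serve as sanity checks, and the general statement follows uniformly with $q = 2n$.
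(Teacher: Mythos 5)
Your proof is correct, but it takes a genuinely different route from the paper's. The paper argues diagrammatically: it starts from the shadow of a toric braid of type $(n,n)$, assigns the tags $x$ and $\overbar{x}$ strand by strand so that each strand becomes an $x$ above/below path, and then straightens the strands one at a time by detour moves; the resulting identity word is $(x_{n-1}\cdots x_1)(x_{n-1}\cdots x_2 x_1^{-1})\cdots(x_{n-1}^{-1}\cdots x_1^{-1})$, a quasitoric braid of type $(n,n)$ in which signs are mixed within each row. You instead work algebraically: since the far commutations \eqref{2} hold for any tag and the braid relation \eqref{1} holds for the dominant tag (because $x$ dominates itself, so $R_3(x,x,x)$ is allowed --- a point worth stating explicitly, as the paper only says ``some of the following relations hold''), the assignment $\sigma_i\mapsto x_i$ defines a homomorphism $B_n\to gB_n$, and you push forward the classical identity $\delta^{\,n}(\delta')^{\,n}=1$, where $\delta^{\,n}$ is the central full twist $\Delta^2$ and $(\delta')^{\,n}=\Delta^{-2}$. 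This yields a representative of type $(n,2n)$ with $n$ all-positive rows followed by $n$ all-negative rows, which is legitimate since the definition of a $p$-quasitoric braid places no constraint on $q$. You also correctly isolate the one real obstacle --- that $\delta^{-1}$ is an ascending word and hence not a legal row, so the inverse full twist must be realized by the descending negative word $\delta'$ --- and your centrality argument resolves it. What your approach buys is a short, self-contained algebraic verification that pinpoints exactly where normality is used; what the paper's approach buys is economy ($q=n$ rather than $q=2n$) and, more importantly, the detour-move technique that is reused in Lemma \ref{Lemma-(i,j)-Pure-Quasitoric} and Theorem \ref{thm:pure_is_quasitoric}, which your argument does not set up.
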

\begin{proof}
Fix a dominant tag (say $x$) in the braid theory corresponding to the group $gB_n$. Consider shadow of a quasitoric braid of type $(n,n)$ without any tags on $n$ strings as shown in the left of Figure \ref{Proof-Id-Quasitoric}. Our strategy is to mark tags $x, \overbar{x}$ in such a way that it yields the trivial braid through a sequence of detour moves. To begin with, we consider the $n$th strand, mark the tag $x$ on the crossing when the strand goes from right to left, and $\overbar{x}$ on the crossings when the strand goes from left to right. Next, we consider the $(n-1)$th strand, mark the tag $x$ on the untagged crossings when the $(n-1)$th strand goes from right to left. Similarly, mark the tag $\overbar{x}$ on the untagged crossings when $(n-1)$th strand goes from left to right. Inductively repeat the process to get a quasitoric braid $\beta$ of type $(n,n)$ in $gB_n$. Note that we can straighten the $n$th strand by a detour move. Now after straightening the $n$th stand, observe that all the crossings are of type $x$ when the $(n-1)$th strand goes from right to left and are of type $\overbar{x}$ when the $(n-1)$th strand goes from left to right. Thus we can again apply detour move on $(n-1)$th strand and straighten it. Repeating this process for all strands, we get that the quasitoric braid $\beta$ represents the identity element in $gB_n$.\end{proof}
 \begin{figure}[tph]
    \centering
    \includegraphics[width=0.6\linewidth]{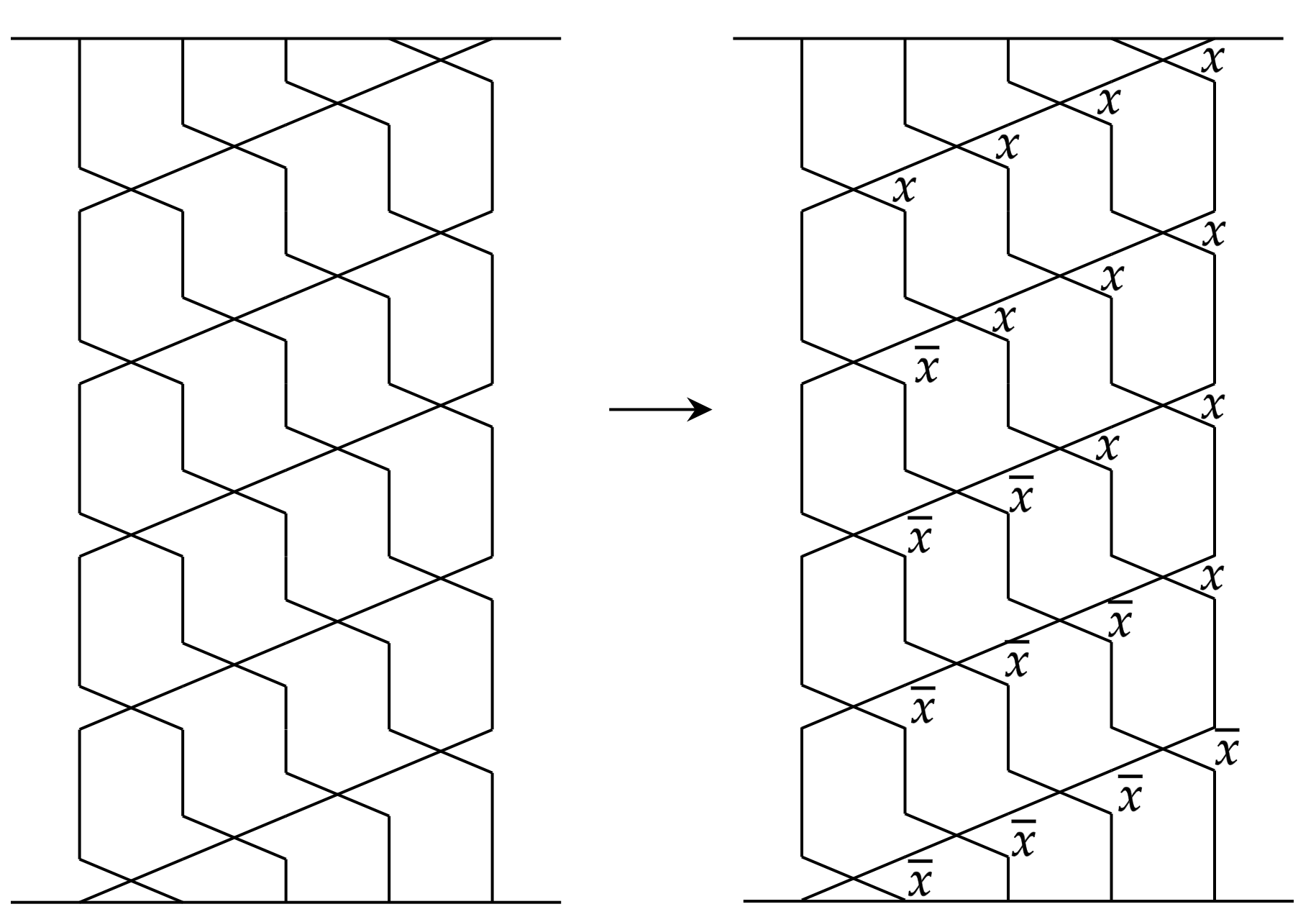}
    \caption{} 
    \label{Proof-Id-Quasitoric}
\end{figure}

\begin{remark}
It is not difficult to prove that the quasitoric braid $\beta$ given by \[(x_{n-1} x_{n-2} \dots x_1) (x_{n-1} x_{n-2} \dots x_1^{-1}) \cdots (x_{n-1} x_{n-2} \dots x_j^{-1} x_{j-1}^{-1} \dots  x_1^{-1}) \cdots (x_{n-1}^{-1} x_{n-2}^{-1} \dots x_1^{-1})\] represents the identity element in the group $gB_n$. A particular example on five strands is shown in Figure \ref{Proof-Id-Quasitoric}.
\end{remark}

\begin{lemma}\label{Lemma-(i,j)-Pure-Quasitoric}
For integers $i,j$ with $1 \leq i < j \leq n$, every $(i,j)$-quasitoric pure braid with $n$ strands is $n$-quasitoric.
\end{lemma}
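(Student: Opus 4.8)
The plan is to induct on the number $n-(j-i+1)$ of strands lying outside the block $\{i,i+1,\dots,j\}$ that carries $\beta'$. Write $p=j-i+1$. Since the strands outside the block run straight in the diagram of Figure \ref{(i,j)-Quasitoric-Braid}, the permutation of the whole braid agrees with that of $\beta'$ on the block; hence if the $(i,j)$-quasitoric braid is pure, then $\beta'$ is itself a pure $p$-quasitoric braid, and by the remark above its type is $(p,q)$ with $q$ a multiple of $p$. When $p=n$ there are no outside strands and the braid is literally $\beta'$, an $n$-quasitoric braid; this is the base case. For the inductive step it then suffices to show that a pure $(i,j)$-quasitoric braid is also a pure $(i,j+1)$-quasitoric braid when $j<n$ (the symmetric argument absorbs the strand $i-1$ when $i>1$), since absorbing one outside strand at a time on either side eventually enlarges the block to all of $\{1,\dots,n\}$, i.e.\ produces an $n$-quasitoric braid.

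For the absorbing step I would mimic the tag-marking argument of Lemma \ref{lem:id_is_quasitoric}. Keep $\beta'$ on the strands $i,\dots,j$ and bring in the adjacent straight strand $j+1$, enlarging the shadow to that of a $(p+1)$-quasitoric braid by placing one crossing at the new position $j$ in every row. At each such new crossing I put the dominant tag, choosing $x$ or $\overbar{x}$ according to whether the extra strand is passing from right to left or from left to right, exactly as in the construction of Lemma \ref{lem:id_is_quasitoric}. If necessary I first pre-compose with an identity $(p+1)$-quasitoric braid, which changes nothing by Lemma \ref{lem:id_is_quasitoric} but makes the number of rows a multiple of $p+1$, so that the enlarged braid remains pure. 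The point of the sign choice is that the trajectory of the extra strand becomes an $x$ above (equivalently $x$ below) path, whence the detour move lemma straightens it and deletes precisely the crossings introduced at position $j$, returning the diagram to $\beta'$ with strand $j+1$ straight. Thus the enlarged $(p+1)$-quasitoric braid equals the original $(i,j)$-quasitoric braid, which is therefore $(i,j+1)$-quasitoric and still pure.

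The routine part is checking that each row is genuinely filled to full width, so that the enlarged word is a bona fide quasitoric braid of type $(p+1,\cdot)$; the main obstacle is the combinatorial bookkeeping of the dominant tags at the new position. One must verify \emph{simultaneously} that every row receives exactly one crossing at position $j$, that the signs $x,\overbar{x}$ are coherent along the added strand so its trajectory is an $x$ above/below path to which the detour move applies, and that the total permutation stays trivial. It is exactly here that the hypothesis $q\equiv 0 \pmod{p}$ (equivalently, purity of $\beta'$) is indispensable: it guarantees that the added strand can be closed up into a removable path, playing the same role that the straightness of the strands played in the proof of Lemma \ref{lem:id_is_quasitoric}.
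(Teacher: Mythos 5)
Your induction scheme --- absorb one outside strand at a time, so that it suffices to treat a single straight strand sitting immediately to the right or to the left of the block --- is exactly the reduction the paper makes, and your observations that purity forces $q\equiv 0\pmod{p}$ and that padding by the identity quasitoric braid of Lemma \ref{lem:id_is_quasitoric} lets you adjust the number of rows are fine. The gap is in the absorbing step itself. In a quasitoric row $w_{p}w_{p-1}\cdots w_{1}$ on $p+1$ strands, the strand entering the row at the rightmost position passes through \emph{every} crossing of that row: it crosses at position $p$, lands at position $p$, immediately meets the next crossing at position $p-1$, and so on down to position $1$. Consequently, in your enlarged diagram the added strand is this descending strand in one out of every $p+1$ rows, and in each such row it runs through the $p-1$ original crossings $y_{k,p-1},\dots,y_{k,1}$, which carry arbitrary tags $a$. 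Its trajectory is therefore not an $x$ above/below path no matter how you choose $x$ versus $\overbar{x}$ at the new crossings, so the detour move does not apply; worse, deleting the added strand from your enlarged diagram does not return $\beta'$ with a straight strand, because the intruding strand has changed which pairs of strands the original crossings join. This is not bookkeeping that can be ``verified simultaneously''; for $p\geq 2$ the construction as stated cannot yield an $x$ above/below trajectory.

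The paper avoids this by separating the two things you try to do in one stroke. It first slides the extra strand into the block \emph{parallel to an existing strand} of $\beta'$ (Figure \ref{(i,j)-Pure-Quasitoric-1}): the added strand then meets only newly created, dominant-tagged crossings, its path genuinely is $x$ above/below, and the detour move certifies that this intermediate (non-quasitoric) diagram equals the original braid. Only afterwards does a second sequence of detour moves rearrange the intermediate diagram into the $(p+1)$-quasitoric shadow (Figure \ref{(i,j)-Pure-Quasitoric-2}). To repair your argument you need such an intermediate stage, or some independent reason why the honest $(p+1)$-quasitoric diagram you write down represents the same element of $gB_n$ as the original; the detour move alone will not give you that.
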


\begin{proof}
It suffices to show that $(1, n-1)$-quasitoric pure braid and $(2,n)$-quasitoric pure braid with $n$ strands is $n$-quasitoric. Consider the $(n-1)$-quasitoric pure braid diagram $\beta$ in $(1, n-1)$-quasitoric pure braid diagram with $n$ strands as shown in left of Figure  \ref{(i,j)-Pure-Quasitoric-1}. We then slide the $n$th strand parallel to $(n-1)$th strand into $\beta$ via detour moves and dominant tags $x$ and $\bar{x}$, as shown in Figure \ref{(i,j)-Pure-Quasitoric-1}. We then by a sequence of detour moves obtain a $n$-quasitoric braid as shown as a particular example in Figure \ref{(i,j)-Pure-Quasitoric-2}. \\
Similarly, we can slide the first strand parallel to second strand in $(2,n)$-quasitoric pure braid to get a $n$-quasitoric braid, and this completes the proof.

\begin{figure}[tph]
    \centering
    \includegraphics[width=0.45\linewidth]{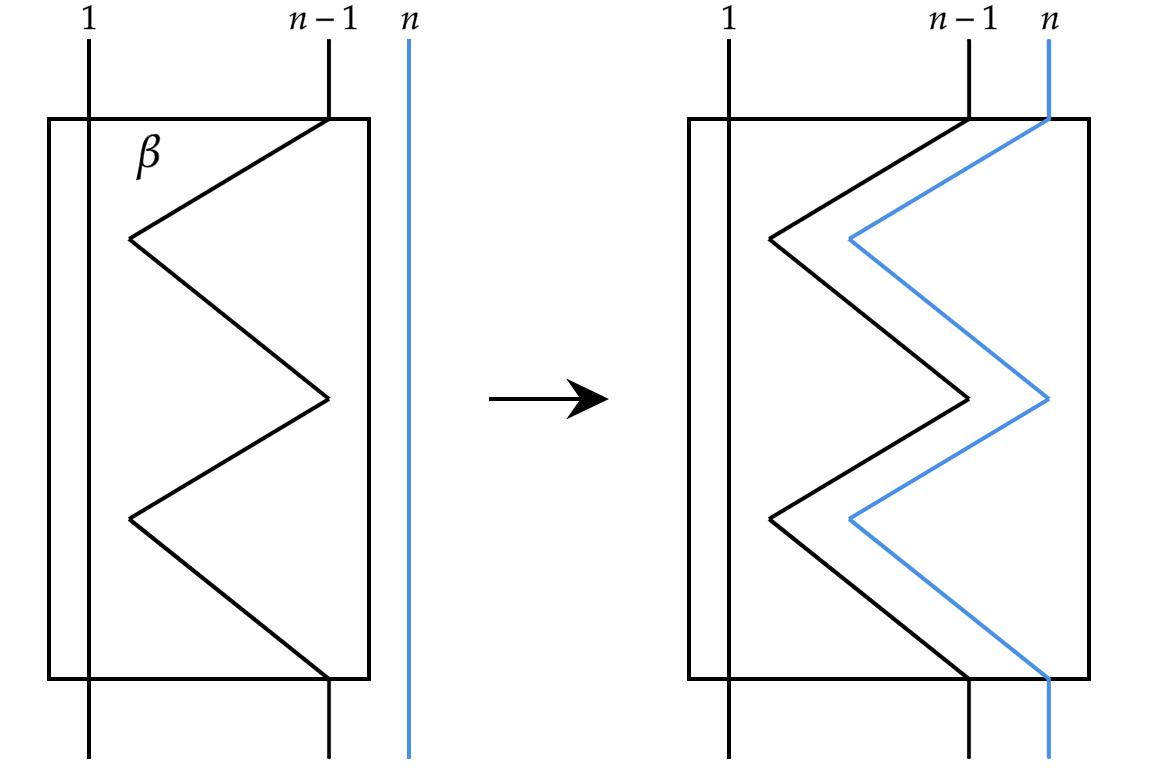}
    \caption{Slide $n$th strand parallel to $(n-1)$th strand via dominant tag $x$ and $\bar{x}$} 
    \label{(i,j)-Pure-Quasitoric-1}
\end{figure}

\begin{figure}[tph]
    \centering
    \includegraphics[width=0.75\linewidth]{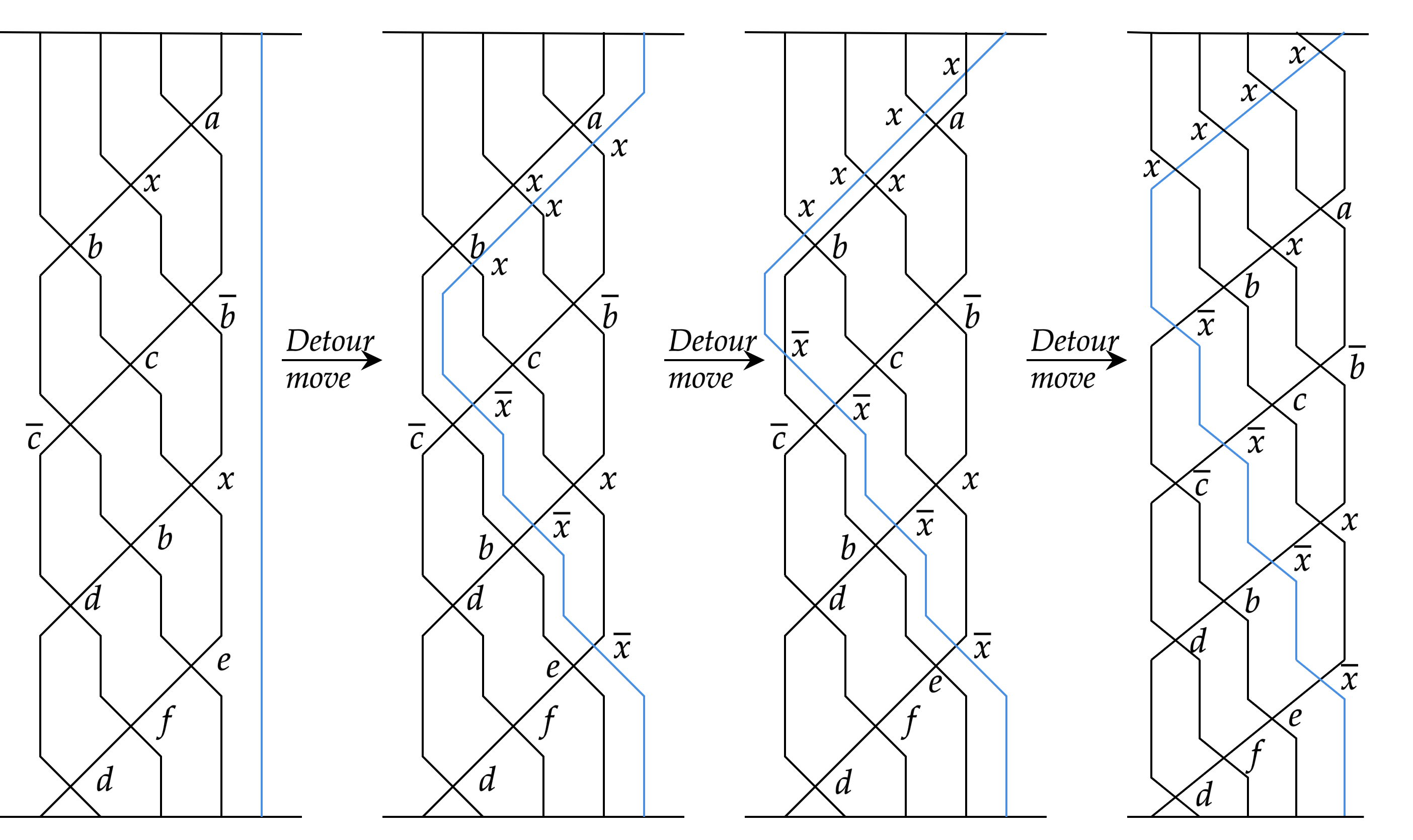}
    \caption{} 
    \label{(i,j)-Pure-Quasitoric-2}
\end{figure}
\end{proof}
We use the generating set obtained for pure braid group, and prove that every pure normal generalized braid is quasitoric.
\begin{theorem}\label{thm:pure_is_quasitoric}
Every pure normal generalized braid is quasitoric.
\end{theorem}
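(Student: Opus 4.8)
The plan is to prove that every pure normal generalized braid is quasitoric by combining the generating set $\mathcal{S}$ from Theorem \ref{gPB_n-generators} with the structural Lemmas \ref{lem:id_is_quasitoric} and \ref{Lemma-(i,j)-Pure-Quasitoric}. Since $gPB_n$ is generated by the elements $\vphantom{\lambda}_{a}\lambda_{i,j}$, $\vphantom{\lambda}_{a}\lambda_{j,i}$, and $\vphantom{\lambda}_{x}\lambda_{i,j}$, and since (as we will verify) the set of $n$-quasitoric pure braids is closed under multiplication, it suffices to show that each individual generator $\vphantom{\lambda}_{\bullet}\lambda_{\bullet,\bullet}$ is itself $n$-quasitoric. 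An arbitrary pure braid is then a product of generators and their inverses, each of which lies in the $n$-quasitoric class, so the whole product does too.

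\textbf{Step 1.} First I would examine the geometric form of a single generator. Recall from the definitions that $\vphantom{\lambda}_{a}\lambda_{i,i+1}=a_ix_i$, $\vphantom{\lambda}_{a}\lambda_{i+1,i}=x_ia_i$, and $\vphantom{\lambda}_{x}\lambda_{i,i+1}=x_i^2$, while the general $\vphantom{\lambda}_{\bullet}\lambda_{i,j}$ is obtained by conjugating these band generators by $x_{i+1}\cdots x_{j-1}$. Geometrically (cf. Figure \ref{Pure-Generator}) each such generator is a single band in which one strand travels across to link with another and returns, which is manifestly a pure braid supported on the strands between positions $i$ and $j$. The key observation is that each generator is an $(i,j)$-quasitoric pure braid with $n$ strands: the conjugating word $x_{i+1}\cdots x_{j-1}$ together with its inverse provides exactly the ``toric'' sweep that realizes the band as a quasitoric pattern on the block of strands from $i$ to $j$, with the remaining strands passing through trivially. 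I would make this explicit by exhibiting each $\lambda$-generator in the form of Figure \ref{(i,j)-Quasitoric-Braid}.

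\textbf{Step 2.} Once each generator is identified as an $(i,j)$-quasitoric pure braid, Lemma \ref{Lemma-(i,j)-Pure-Quasitoric} immediately upgrades it to an $n$-quasitoric braid, since that lemma asserts precisely that every $(i,j)$-quasitoric pure braid on $n$ strands is $n$-quasitoric. The same argument applies to the inverses of the generators, as the inverse of a quasitoric braid (read bottom-to-top, replacing each tag by its negative version) is again quasitoric. Finally I would establish closure under concatenation: if $\beta$ and $\beta'$ are both $n$-quasitoric of types $(n,q)$ and $(n,q')$, then stacking them gives an $n$-quasitoric braid of type $(n,q+q')$, directly from the definition $\beta=\beta_1\cdots\beta_q$. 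Lemma \ref{lem:id_is_quasitoric} handles the base case, guaranteeing that even the trivial products (and in particular the empty word) are $n$-quasitoric, so that the class is a genuine monoid containing every generator.

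\textbf{The main obstacle} I anticipate is Step 1: verifying rigorously that each conjugated generator $\vphantom{\lambda}_{\bullet}\lambda_{i,j}$ really does fit the $(i,j)$-quasitoric template of Figure \ref{(i,j)-Quasitoric-Braid}, rather than merely being ``morally'' toric. The conjugating factors $x_{j-1}^{-1}\cdots x_{i+1}^{-1}(\,\cdot\,)x_{i+1}\cdots x_{j-1}$ must be reorganized, using the detour move (and the relations \eqref{1}--\eqref{4} available in a normal theory), into a product of rows $\beta_k=y_{k,\ell}\cdots y_{k,1}$ of the required quasitoric shape; keeping careful track of which crossings carry the dominant tag $x$ versus the original tag $a$, and in which direction each strand passes, is where the bookkeeping is delicate. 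I expect this to go through because the dominant tag $x$ permits exactly the detour moves (Lemma, Detour move) needed to slide strands into the toric pattern, mirroring the strand-sliding argument already used in the proof of Lemma \ref{Lemma-(i,j)-Pure-Quasitoric}.
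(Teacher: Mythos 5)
Your overall strategy is the paper's: reduce to the generating set $\mathcal{S}$ of Theorem \ref{gPB_n-generators}, show each generator is $(i,j)$-quasitoric, and invoke Lemma \ref{Lemma-(i,j)-Pure-Quasitoric} (closure of the quasitoric class under concatenation is immediate from the definition, as you say). But the step you defer to your ``main obstacle'' paragraph is precisely the content of the proof, and as stated your Step 1 is not yet correct. The generator $\vphantom{\lambda}_{a}\lambda_{i,j}=x_{j-1}^{-1}\cdots x_{i+1}^{-1}a_i x_i x_{i+1}\cdots x_{j-1}$ is \emph{not} literally in the quasitoric form $\beta_1\cdots\beta_q$ with each $\beta_k=y_{k,p-1}\cdots y_{k,1}$: the prefix $x_{j-1}^{-1}\cdots x_{i+1}^{-1}a_i$ is a legitimate descending row, but the suffix $x_i x_{i+1}\cdots x_{j-1}$ ascends and so is not a row of the required shape. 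The missing idea, which the paper supplies, is to replace that ascending tail by the quasitoric factorization of the identity from Lemma \ref{lem:id_is_quasitoric} (in the form of the subsequent remark) with its last row $x_{j-1}^{-1}\cdots x_i^{-1}$ deleted; this expresses $x_i\cdots x_{j-1}$ as the product of descending rows $(x_{j-1}\cdots x_i)(x_{j-1}\cdots x_i^{-1})\cdots(x_{j-1}x_{j-2}^{-1}\cdots x_i^{-1})$, after which the whole word is $(i,j)$-quasitoric and Lemma \ref{Lemma-(i,j)-Pure-Quasitoric} applies. Without some such explicit device your argument does not close.

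A second, smaller problem: you dispose of the inverse generators by asserting that the inverse of a quasitoric braid, read bottom-to-top with tags negated, is again quasitoric. That reversal turns each descending row $y_{k,p-1}\cdots y_{k,1}$ into an ascending word, so the result is not quasitoric by the definition in use; indeed, closure of $qgB_n$ under inversion is Theorem \ref{Subgroup-Theorem}, whose proof relies on the present theorem, so appealing to it here would be circular. The inverses $\vphantom{\lambda}_{\bullet}\lambda_{\bullet,\bullet}^{-1}$ must be handled by the same explicit rewriting as the generators themselves, which is why the paper states the reduction for $\lambda^{\pm 1}$ from the outset.
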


\begin{proof}
It suffices to show that pure braids $\vphantom{\lambda}_{a} \lambda_{i,j}^{\pm 1}$, $\vphantom{\lambda}_{a} \lambda_{j, i}^{\pm 1}$ and $\vphantom{\lambda}_{x} \lambda_{i,j}^{\pm 1}$ for $ 1 \leq i < j \leq n-1$ are quasitoric. Let us first consider $\vphantom{\lambda}_{a} \lambda_{i,j}$.
\begin{figure}[tph]
    \centering
    \includegraphics[width=0.75\linewidth]{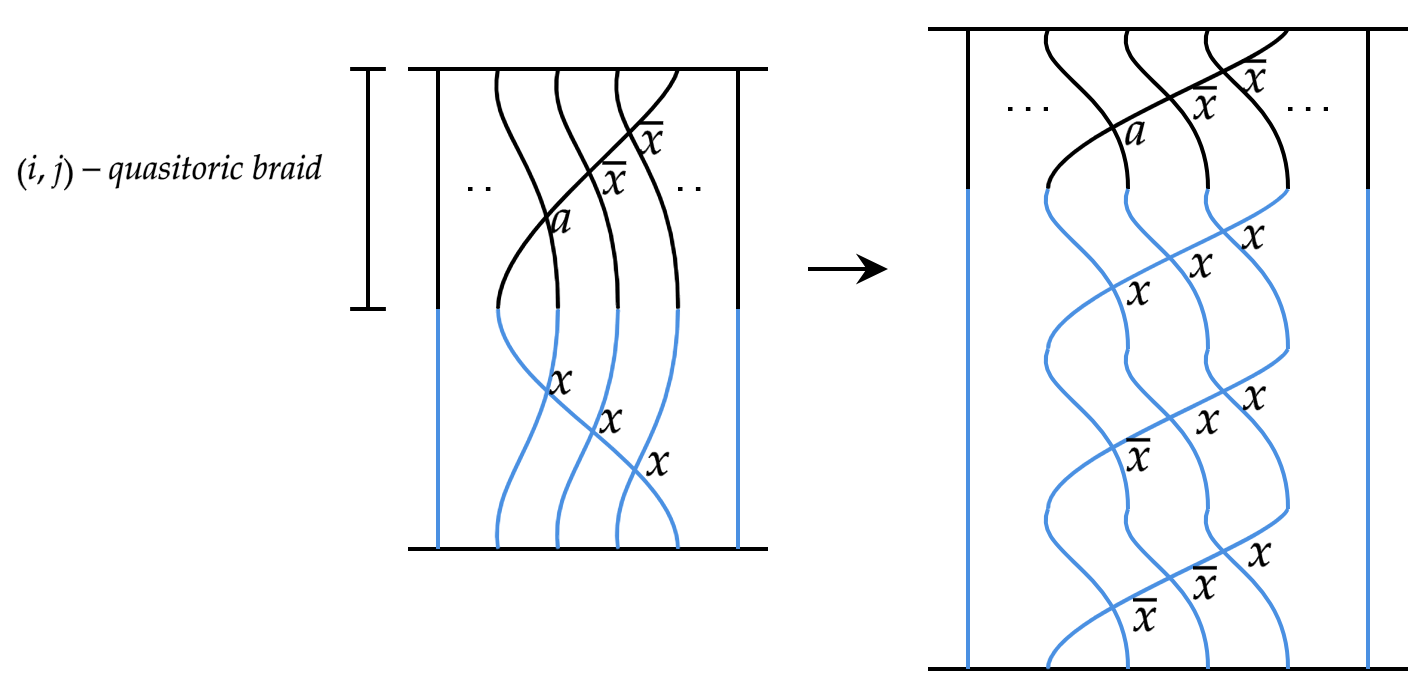}
    \caption{} 
    \label{Pure-Quasitoric-Proof-1}
\end{figure}
 In the Figure \ref{Pure-Quasitoric-Proof-1}, the upper part of the generator is $(i,j)$-quasitoric with $n$ strands, hence $n$-quasitoric, whereas the bottom part of the generator can be made $(i,j)$-quasitoric by the moves shown. In other words, the generator $\vphantom{\lambda}_{a} \lambda_{i,j}$ can be expressed as 
\begin{align*}
\vphantom{\lambda}_{a} \lambda_{i,j}&= x_{j-1}^{-1} x_{j-2}^{-1} \dots x_{i+1}^{-1}a_i x_i x_{i+1} \dots x_{j-2}  x_{j-1} \\
&= (x_{j-1}^{-1} \dots x_{i+1}^{-1}a_i) (x_{j-1} x_{j-2} \dots x_i) (x_{j-1} x_{j-2} \dots x_i^{-1}) \cdots (x_{j-1} \dots x_kx_{k-1}^{-1} \dots x_i^{-1}) \cdots  (x_{j-1} x_{j-2}^{-1} \dots x_i^{-1}).
\end{align*}

Then, by Lemma \ref{Lemma-(i,j)-Pure-Quasitoric}, we get that the generator $\vphantom{\lambda}_{a} \lambda_{i,j}$ is quasitoric. The proof of generator $\vphantom{\lambda}_{a} \lambda_{j,i}$ being quasitoric is given in Figure \ref{Pure-Quasitoric-Proof-2}. Similarly, the rest of the cases can be dealt in the same manner. \\

\begin{figure}[tph]
    \centering
    \includegraphics[width=0.8\linewidth]{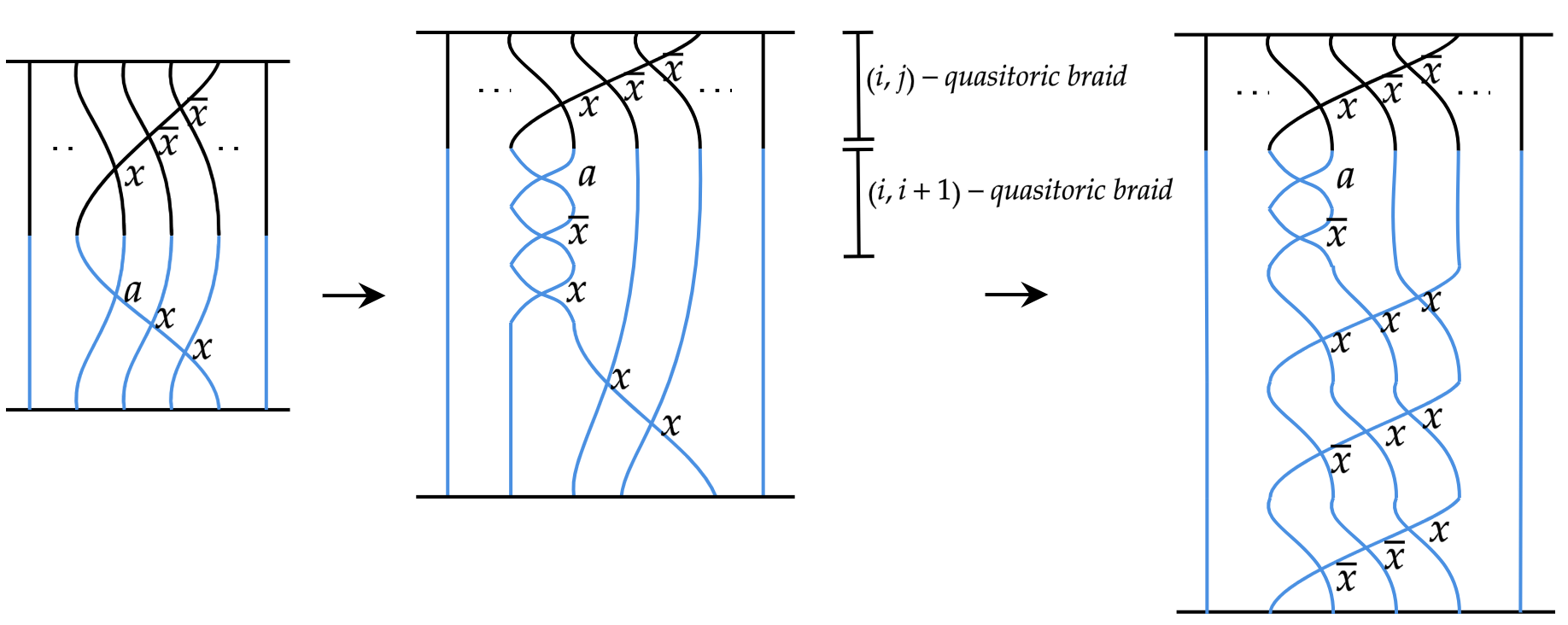}
    \caption{} 
\label{Pure-Quasitoric-Proof-2}
\end{figure}
\end{proof}

We recall the notion of regular generalized knot theory defined in \cite{MR4474096}. A knot diagram is an immersion of finitely circles into the $2$-sphere, where the double points are either decorated with tags denoted by roman letters `$a$' or their negative versions `$\bar{a}$'. We assume that there is at least one tag `$a$' for which $R_1(a)$ move is allowed, shown in Figure \ref{R1(a)-move}. Two knot diagrams are {\it isotopic} if they are related by a finite sequence moves shown in figures \ref{R-Moves} and \ref{R1(a)-move}, which are predetermined for the given tags. In \cite{MR4474096}, it is proved that every regular generalized knot is a closure of a regular generalized braid diagram. Consequently, note that the braids related by $M_1$ and $M_2$ moves shown in Figure \ref{M1 and M2 moves} have isotopic closures. 
\begin{figure}[tph]
    \centering
    \includegraphics[width=0.4\linewidth]{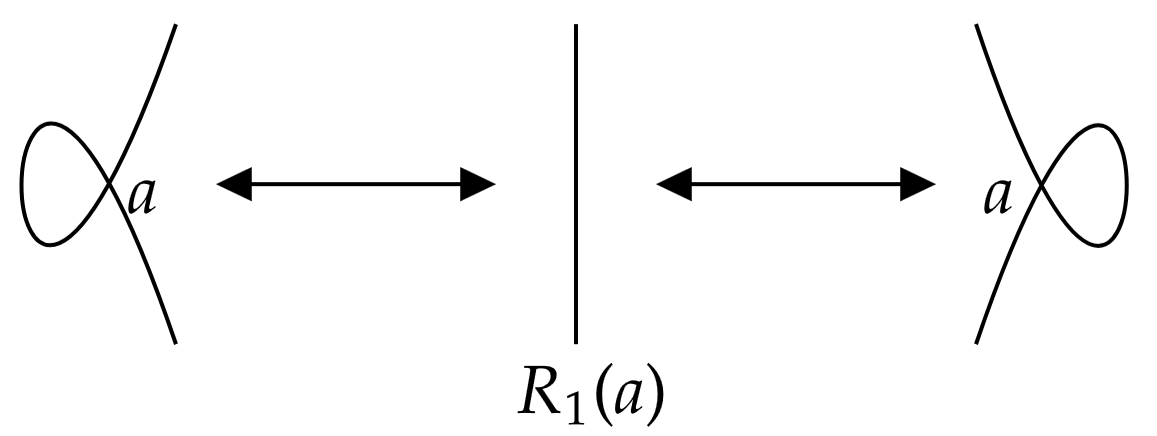}
    \caption{$R_1(a)$ move} 
     \label{R1(a)-move}
\end{figure}

\begin{figure}[tph]
    \centering
    \includegraphics[width=0.5\linewidth]{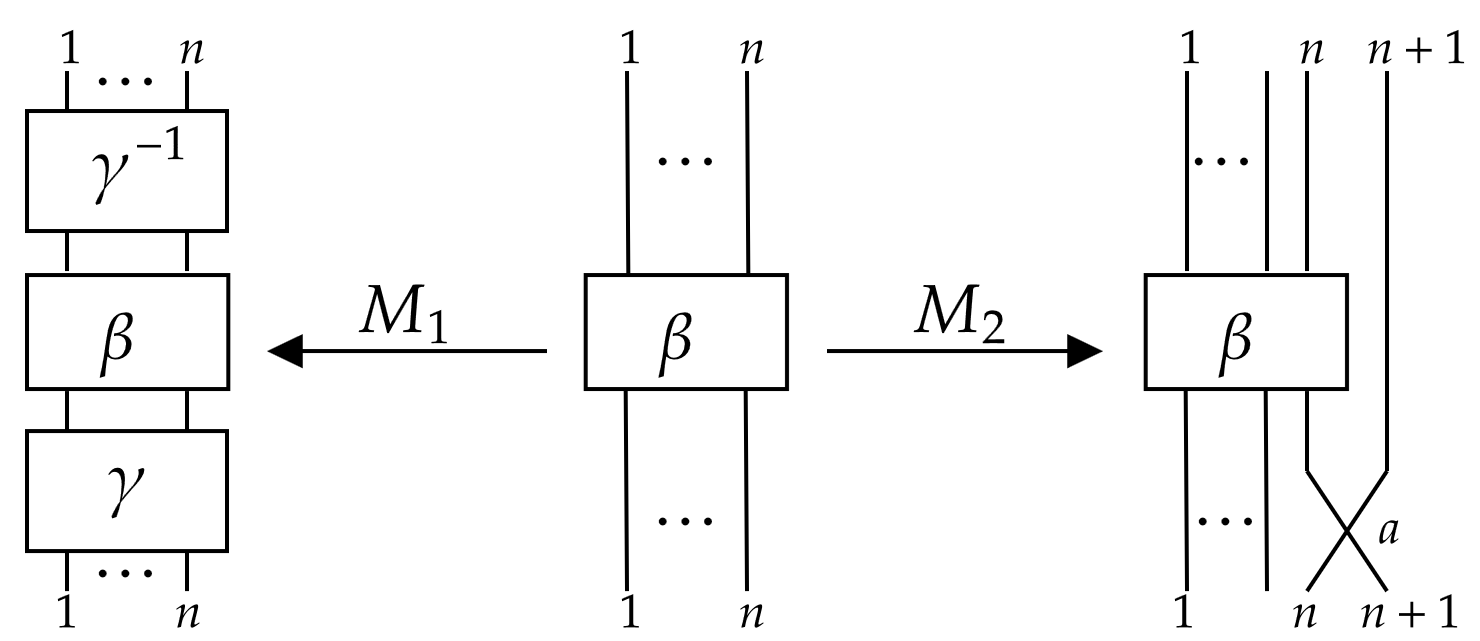}
    \caption{$M_1$ and $M_2$ moves} 
     \label{M1 and M2 moves}
\end{figure}
We now prove the main theorem of this section. 

\begin{theorem}\label{Quasitoric-Main-Theorem}
Every oriented normal generalized link is a closure of a quasitoric normal generalized braid.
\end{theorem}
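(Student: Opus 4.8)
The plan is to combine the generalized Alexander theorem with the decomposition of an arbitrary braid into a toric part and a pure part, and then to absorb the toric part as extra rows of a quasitoric braid. By the generalized Alexander theorem \cite{MR4474096}, the given oriented link $L$ is the closure $\widehat{\beta}$ of some $\beta \in gB_n$, and the number of components $k$ of $L$ equals the number of cycles of the permutation $\pi_n(\beta) \in S_n$. Write $\tau = x_{p-1}x_{p-2}\cdots x_1$ for the toric row in the dominant tag on $p$ strands; its permutation $c := \pi_p(\tau)$ is a $p$-cycle, so $\pi_p(\tau^{k}) = c^{k}$ consists of $k$ cycles each of length $p/k$. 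Since the $M_1$ (conjugation) and $M_2$ (stabilization) moves preserve the closure (Figure \ref{M1 and M2 moves}), it suffices to transform $\beta$ by such moves into a braid $\beta'$ on $p = k\ell$ strands, for a suitable $\ell$, with $\pi_p(\beta') = c^{k}$, and then to show that any such $\beta'$ is quasitoric.

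First I would carry out the reduction of the permutation. Conjugation of $\beta$, realized by an $M_1$ move, can change $\pi_n(\beta)$ to any conjugate but cannot alter its cycle type; since a permutation is conjugate to $c^{k}$ precisely when it has $k$ cycles of equal length, I must also adjust the cycle lengths, and this is where stabilization enters. An $M_2$ move adjoins one new strand together with a single crossing, in a tag for which the $R_1$ move is available on the closure, carrying $\beta \in gB_m$ to a braid in $gB_{m+1}$; a direct check shows its permutation is obtained from $\pi_m(\beta)$ by inserting the new point into the cycle containing the strand $m$, so each such move lengthens one chosen cycle by one, after first conjugating the relevant strand into the last position by an $M_1$ move. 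Lengthening the shorter cycles, I equalize all $k$ cycle lengths to a common value $\ell$; the braid then has $p = k\ell$ strands and $k$ cycles of length $\ell$, and one further $M_1$ move conjugates its permutation to exactly $c^{k}$. Writing $\beta'$ for the result, we have $\widehat{\beta'} = L$ and $\pi_p(\beta') = c^{k}$.

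It remains to show that $\beta'$ is quasitoric. Since $\pi_p(\tau^{k}) = c^{k} = \pi_p(\beta')$, the element $p_0 := \tau^{-k}\beta'$ lies in the pure braid group $gPB_p$. By Theorem \ref{thm:pure_is_quasitoric}, $p_0$ is quasitoric, say of type $(p,m)$, so $p_0 = r_1 r_2 \cdots r_m$ where each $r_j$ is a width-$p$ quasitoric row and (by the earlier remark on pure quasitoric braids) $m$ is a multiple of $p$. Then
\[
\beta' = \tau^{k} p_0 = \underbrace{(x_{p-1}\cdots x_1)\cdots(x_{p-1}\cdots x_1)}_{k}\, r_1 r_2 \cdots r_m,
\]
which is a concatenation of $k+m$ width-$p$ rows, each of the form required in the definition of a quasitoric braid; hence $\beta'$ is quasitoric of type $(p, k+m)$. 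Its closure has $\gcd(p, k+m) = \gcd(p,k) = k$ components, consistent with $\widehat{\beta'} = L$, and the theorem follows.

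The hard part will be the permutation reduction of the second paragraph. One must verify that in the generalized setting the $M_2$ move can genuinely be performed using a tag for which the $R_1$ move is allowed on the closure, as guaranteed in the underlying knot theory, so that the closure is truly preserved, and one must confirm that the crossings introduced by stabilization cause no trouble afterward; they are simply swept into the pure factor $p_0$, whose quasitoric expression is supplied uniformly over all tags by Theorem \ref{thm:pure_is_quasitoric}. Once the permutation has been normalized to $c^{k}$, the decomposition $\beta' = \tau^{k} p_0$ and the reinterpretation of $\tau^{k}$ as $k$ additional quasitoric rows is immediate.
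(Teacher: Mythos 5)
Your proposal is correct and follows essentially the same route as the paper: apply the generalized Alexander theorem, use $M_1$ and $M_2$ moves to normalize the permutation to a power of the full cycle $(1\,2\,\cdots\,p)$, factor the resulting braid as a toric part times a pure braid, and invoke Theorem \ref{thm:pure_is_quasitoric}. Your version merely spells out a few details the paper leaves implicit (equalizing the cycle lengths and the explicit row-concatenation showing the product is quasitoric), but the decomposition and the key lemmas used are identical.
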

\begin{proof}
Let $L$ be a normal generalized link. Then by \cite[Theorem 6.1]{MR4474096}, for some $n \geq 1$, there exists a braid $\beta \in gB_n$ whose closure is equivalent to $L$. Consider the permutation $\pi_n(\beta)$ and the orbits of the action of $\pi_n(\beta)$ on $\{1,2, \ldots, n\}$. The number of elements in each orbit might be different.

The $M_1$ move on a braid $\beta$ conjugates it, and therefore, conjugates the corresponding permutation. Conjugating a permutation only shuffles the elements among the orbits but does not change the number of elements in each orbit. On the other hand, applying the $M_2$ move on a braid, adds one new strand in the braid and adds $n+1$ to the orbit containing $n$ in the corresponding permutation and other orbits remain unchanged.

Thus by reiterating the $M_1$ and $M_2$ moves on $\beta$, it is ensured that there is a braid $\beta'$ in $gB_m$ for some $m \geq n$, such that the closure of $\beta'$ is equivalent to $L$ and the permutation $\pi_m(\beta')$ is the $k$th power of the cyclic permutation $(1~ 2~ \cdots m)$ for some non-negative integer $k$. Now observe that the braid $\beta''$, where
\[ \beta''= \beta' (x_{m-1} x_{m-2} \ldots x_1)^{-k},\]
is a pure braid, which is quasitoric by Theorem \ref{thm:pure_is_quasitoric}. Thus, $\beta'$ being a product of two quasitoric braids is quasitoric.
\end{proof}

\begin{theorem}\label{Subgroup-Theorem}
For $n \geq 2$, the set $qgB_n$ of all quasitoric normal generalized braids on $n$ strands forms a subgroup of the group $gB_n$ under the operation of concatenation.
\end{theorem}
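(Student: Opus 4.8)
The plan is to verify the three subgroup criteria: that the identity lies in $qgB_n$, that the set is closed under concatenation, and that it is closed under inverses. The first is exactly Lemma~\ref{lem:id_is_quasitoric}. Closure under concatenation is immediate from the definition: if $\alpha=\alpha_1\cdots\alpha_q$ is quasitoric of type $(n,q)$ and $\gamma=\gamma_1\cdots\gamma_{q'}$ is quasitoric of type $(n,q')$, then $\alpha\gamma=\alpha_1\cdots\alpha_q\,\gamma_1\cdots\gamma_{q'}$ is a stack of $q+q'$ descending rows, hence quasitoric of type $(n,q+q')$. All the genuine work is therefore in showing that $qgB_n$ is closed under taking inverses.

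The engine for the inverse step is a computation of the underlying permutation. First I would record that every elementary braid $a_i$, for any tag $a$, simply swaps the two adjacent strands it crosses, so $\pi_n(a_i)=\pi_n(a_i^{-1})=s_i$, the transposition $(i,\,i+1)$; this is forced by the monotonicity of strands, independently of the crossing type. Consequently each descending row $y_{j,n-1}\cdots y_{j,1}$ maps under $\pi_n$ to $c:=s_{n-1}s_{n-2}\cdots s_1$, the $n$-cycle $(1\,2\,\cdots\,n)$, regardless of the tags chosen. Hence a quasitoric braid $\beta$ of type $(n,q)$ satisfies $\pi_n(\beta)=c^{\,q}$, so its permutation depends only on $q \bmod n$ and in particular $c^{\,n}=\mathrm{id}$.

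Now set $\delta=x_{n-1}x_{n-2}\cdots x_1$, a single row built from the dominant tag, so that $\delta$ is quasitoric of type $(n,1)$ and each power $\delta^{m}$ with $m\ge 0$ is quasitoric of type $(n,m)$ (with $\delta^{0}=1$ quasitoric by Lemma~\ref{lem:id_is_quasitoric}). Given a quasitoric $\beta$ of type $(n,q)$, I choose $m\ge 0$ with $m\equiv -q\pmod n$, say $m+q=kn$; then $\pi_n(\delta^{m}\beta)=c^{\,m+q}=(c^{\,n})^{k}=\mathrm{id}$, so $P:=\delta^{m}\beta$ lies in $gPB_n$. By Theorem~\ref{thm:pure_is_quasitoric} the pure braid $P$ is quasitoric, and since $gPB_n$ is a group, $P^{-1}$ is again pure and hence quasitoric. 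Writing $\beta=\delta^{-m}P$ gives $\beta^{-1}=P^{-1}\delta^{m}$, a concatenation of two quasitoric braids, which is quasitoric by the closure already established. This shows $\beta^{-1}\in qgB_n$ and completes the verification.

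The conceptual heart, and the only place requiring insight, is the observation that $\pi_n$ collapses every tag to the same transposition, which forces the permutation of an $n$-strand quasitoric braid to be a power of the single $n$-cycle $c$. This is precisely what makes it possible to repair an arbitrary quasitoric braid into a \emph{pure} one by multiplying by a suitable power of $\delta$, after which Theorem~\ref{thm:pure_is_quasitoric} supplies the rest. I expect no serious obstacle beyond confirming the tag-independence of $\pi_n$ for all admissible crossing types, which is a structural consequence of the monotonicity requirement in the definition of a braid diagram.
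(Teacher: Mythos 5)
Your proof is correct, and it follows the same overall strategy as the paper's: the identity and closure steps are immediate, and the inverse step is reduced to Theorem \ref{thm:pure_is_quasitoric} by correcting $\beta$ to a pure braid. The only genuine difference is the choice of correction. The paper takes $k$ to be (in effect) the order of $\pi_n(\beta)$ in $S_n$, so that $(\beta^{-1})^k$ is pure and hence quasitoric, and writes $\beta^{-1}=(\beta^{-1})^{k}\beta^{k-1}$; you instead multiply on the left by a power of the dominant row $\delta=x_{n-1}\cdots x_1$ chosen so that $P=\delta^{m}\beta$ is pure, and write $\beta^{-1}=P^{-1}\delta^{m}$. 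Your route has the small advantage of making explicit why the correction exists and how large it must be: since $\pi_n$ sends every elementary braid $y_{j,i}^{\pm 1}$ to the transposition $s_i$ regardless of tag, each row maps to the same $n$-cycle $c$ and $\pi_n(\beta)=c^{q}$ — which is also exactly the content of the paper's earlier remark that a pure $(p,q)$-quasitoric braid must have $q$ a multiple of $p$. Both decompositions rest on the same key inputs, namely Lemma \ref{lem:id_is_quasitoric} and the fact that every pure braid (hence $P^{-1}$, respectively $(\beta^{-1})^{k}$) is quasitoric, so the two arguments are interchangeable.
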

\begin{proof}
By Lemma \ref{lem:id_is_quasitoric}, the identity element is in $qgB_n$ so that the set is non-empty. It is easy to verify that $qgB_n$ is closed under the concatenation. Now let $\beta \in qgB_n$. If $\beta$ is a pure braid, then by Theorem \ref{thm:pure_is_quasitoric}, $\beta^{-1}$ is in $qgB_n$. Now suppose that $\beta$ is not pure. Then note that for some non-negative integer $k$, $(\beta^{-1})^k$ is a pure braid, and by Theorem \ref{thm:pure_is_quasitoric}, $(\beta^{-1})^k$ is quasitoric. Observe that
\[ \beta^{-1}= (\beta^{-1})^k \beta^{k-1},\]
where both $(\beta^{-1})^k$ and $\beta^{k-1}$ are quasitoric, and thus $\beta^{-1}$ is quasitoric.
\end{proof}

\begin{ack}
NN has received funding from the European Union’s Horizon Europe Research and Innovation programme under the Marie Sklodowska Curie grant agreement no. 101066588. MS has received funding from Fulbright-Nehru Postdoctoral Fellowship grant 2865/FNPDR/2022.
\end{ack}

\bibliography{references1}{}

\begin{thebibliography}{{Lam}12}

\bibitem[Bar04]{MR2128039}
Valerij~G. Bardakov.
\newblock The virtual and universal braids.
\newblock {\em Fund. Math.}, 184:1--18, 2004.

\bibitem[BBD15]{MR3415242}
Valeriy~G. Bardakov, Paolo Bellingeri, and Celeste Damiani.
\newblock Unrestricted virtual braids, fused links and other quotients of
  virtual braid groups.
\newblock {\em J. Knot Theory Ramifications}, 24(12):1550063, 23, 2015.

\bibitem[BF22]{MR4474096}
Andrew Bartholomew and Roger Fenn.
\newblock Alexander and {M}arkov theorems for generalized knots, {I}.
\newblock {\em J. Knot Theory Ramifications}, 31(8):Paper No. 2240009, 20,
  2022.

\bibitem[Bir93]{MR1191478}
Joan~S. Birman.
\newblock New points of view in knot theory.
\newblock {\em Bull. Amer. Math. Soc. (N.S.)}, 28(2):253--287, 1993.

\bibitem[BK22]{2022arXiv221208267B}
Valeriy~G. {Bardakov} and Tatyana~A. {Kozlovskaya}.
\newblock {Singular braids, singular links and subgroups of camomile type}.
\newblock {\em arXiv e-prints}, page arXiv:2212.08267, December 2022.

\bibitem[BS15a]{MR3335879}
Yongju Bae and Seogman Seo.
\newblock On a quasitoric virtual braid presentation of a virtual link.
\newblock {\em Kyungpook Math. J.}, 55(1):191--203, 2015.

\bibitem[BS15b]{MR3418559}
Yongju Bae and Seogman Seo.
\newblock On the quasitoric braid index of a link.
\newblock {\em J. Korean Math. Soc.}, 52(6):1305--1321, 2015.

\bibitem[BSV19]{MR4027588}
Valeriy Bardakov, Mahender Singh, and Andrei Vesnin.
\newblock Structural aspects of twin and pure twin groups.
\newblock {\em Geom. Dedicata}, 203:135--154, 2019.

\bibitem[CKP16]{MR3584259}
Abhijit Champanerkar, Ilya Kofman, and Jessica~S. Purcell.
\newblock Volume bounds for weaving knots.
\newblock {\em Algebr. Geom. Topol.}, 16(6):3301--3323, 2016.

\bibitem[Fen15]{MR3381323}
Roger Fenn.
\newblock Generalised biquandles for generalised knot theories.
\newblock In {\em New ideas in low dimensional topology}, volume~56 of {\em
  Ser. Knots Everything}, pages 79--103. World Sci. Publ., Hackensack, NJ,
  2015.

\bibitem[FKR98]{MR1654641}
Roger Fenn, Ebru Keyman, and Colin Rourke.
\newblock The singular braid monoid embeds in a group.
\newblock {\em J. Knot Theory Ramifications}, 7(7):881--892, 1998.

\bibitem[FRR93]{MR1257905}
Roger Fenn, Rich\'{a}rd Rim\'{a}nyi, and Colin Rourke.
\newblock Some remarks on the braid-permutation group.
\newblock In {\em Topics in knot theory ({E}rzurum, 1992)}, volume 399 of {\em
  NATO Adv. Sci. Inst. Ser. C: Math. Phys. Sci.}, pages 57--68. Kluwer Acad.
  Publ., Dordrecht, 1993.

\bibitem[Gol81]{MR600411}
Deborah~L. Goldsmith.
\newblock The theory of motion groups.
\newblock {\em Michigan Math. J.}, 28(1):3--17, 1981.

\bibitem[Kau99]{MR1721925}
Louis~H. Kauffman.
\newblock Virtual knot theory.
\newblock {\em European J. Combin.}, 20(7):663--690, 1999.

\bibitem[KL04]{MR2128049}
Louis~H. Kauffman and Sofia Lambropoulou.
\newblock Virtual braids.
\newblock {\em Fund. Math.}, 184:159--186, 2004.

\bibitem[Lam99]{MR1941420}
Christoph Lamm.
\newblock {\em Zylinder-{K}noten und symmetrische {V}ereinigungen}, volume 321
  of {\em Bonner Mathematische Schriften [Bonn Mathematical Publications]}.
\newblock Universit\"{a}t Bonn, Mathematisches Institut, Bonn, 1999.
\newblock Dissertation, Rheinische Friedrich-Wilhelms-Universit\"{a}t Bonn,
  Bonn, 1999.

\bibitem[{Lam}12]{2012arXiv1210.4543L}
Christoph {Lamm}.
\newblock {Fourier Knots}.
\newblock {\em arXiv e-prints}, page arXiv:1210.4543, October 2012.

\bibitem[LS09]{MR2589233}
Sang~Youl Lee and Myoungsoo Seo.
\newblock Formulas for the {C}asson invariant of certain integral homology
  3-spheres.
\newblock {\em J. Knot Theory Ramifications}, 18(11):1551--1576, 2009.

\bibitem[Man02]{MR1881552}
V.~O. Manturov.
\newblock A combinatorial representation of links by quasitoric braids.
\newblock {\em European J. Combin.}, 23(2):207--212, 2002.

\bibitem[McC86]{MR873421}
J.~McCool.
\newblock On basis-conjugating automorphisms of free groups.
\newblock {\em Canad. J. Math.}, 38(6):1525--1529, 1986.

\bibitem[MKS66]{MR207802}
Wilhelm Magnus, Abraham Karrass, and Donald Solitar.
\newblock {\em Combinatorial group theory: {P}resentations of groups in terms
  of generators and relations}.
\newblock Interscience Publishers [John Wiley \& Sons], New York-London-Sydney,
  1966.

\bibitem[MS21]{MR4272646}
Rama Mishra and Ross Staffeldt.
\newblock Polynomial invariants, knot homologies, and higher twist numbers of
  weaving knots {$W(3,n)$}.
\newblock {\em J. Knot Theory Ramifications}, 30(4):Paper No. 2150025, 57,
  2021.

\bibitem[NNS23]{MR4651964}
Tushar~Kanta Naik, Neha Nanda, and Mahender Singh.
\newblock Structure and automorphisms of pure virtual twin groups.
\newblock {\em Monatsh. Math.}, 202(3):555--582, 2023.

\bibitem[Omo24]{doi:10.1142/S0218216524500469}
Genki Omori.
\newblock Minimal generating sets and the abelianization for the quasitoric
  braid group.
\newblock {\em Journal of Knot Theory and Its Ramifications}, 33(13):2450046,
  2024.

\bibitem[ST23]{2023arXiv230814118S}
Reiko {Shinjo} and Kokoro {Tanaka}.
\newblock {Any link has a diagram with only triangles and quadrilaterals}.
\newblock {\em arXiv e-prints}, page arXiv:2308.14118, August 2023.

\end{thebibliography}
\bibliographystyle{alpha}
\end{document}